%% file: fw_nonlinear_polytopes.tex
\documentclass[11pt]{article}

\usepackage[margin=1.05in]{geometry}
\usepackage{amsmath,amssymb,amsthm,mathtools}
\usepackage[T1]{fontenc}
\usepackage{lmodern}
\usepackage{microtype}
\usepackage{tikz}
\usepackage{float}
\usepackage{subcaption}
\usetikzlibrary{arrows.meta,calc,decorations.pathreplacing}
\usepackage{pgfplots}
\pgfplotsset{compat=1.18}
\usepackage[section]{placeins}
\usepackage{xurl}
\usepackage[colorlinks=true,linkcolor=blue,citecolor=blue,urlcolor=blue]{hyperref}
\makeatletter
\@ifundefined{ver@hdvips.def}{}{\usepackage[hyphenbreaks]{breakurl}}
\makeatother
\hypersetup{
  pdftitle={Last-Iterate Convergence of Frank--Wolfe for Nonlinear Variational Inequalities on Polytopes},
  pdfauthor={Tobias Harks}
}

\newtheorem{theorem}{Theorem}
\newtheorem{lemma}{Lemma}
\newtheorem{proposition}{Proposition}
\newtheorem{corollary}{Corollary}

\theoremstyle{definition}

\theoremstyle{remark}

\newcommand{\R}{\mathbb{R}}
\newcommand{\ip}[2]{\langle #1,#2\rangle}
\newcommand{\norm}[1]{\lVert #1\rVert}

\DeclareMathOperator{\conv}{conv}

\DeclareMathOperator*{\argmin}{arg\,min}

\title{Last-Iterate Convergence of Frank--Wolfe\\
for Nonlinear Variational Inequalities on Polytopes}
\author{Tobias Harks\thanks{Chair of Mathematical Optimization,
University of Passau, Dr.-Hans-Kapfinger-Str.~30, 94032 Passau, Germany.
Email: \href{mailto:tobias.harks@uni-passau.de}{\texttt{tobias.harks@uni-passau.de}}.}}
\date{October 12, 2026}
\begin{document}
\maketitle
\begin{abstract}
We consider the Frank--Wolfe algorithm for solving variational
inequalities over compact polytopes under a monotone $C^1$ operator
$\Phi$. The linear minimization oracle returns vertices, and we assume
a coercivity condition on the Jacobian of $\Phi$ at a solution.
We prove last-iterate convergence to the unique
solution at rate $O(k^{-\min\{\alpha,1\}})$ for step sizes
$\gamma_k=\alpha/(k+\beta)$ with $\beta\geq\alpha>0$. The bound holds
uniformly over starting points and vertex tie-breaking. In particular,
both $1/(k+1)$ and $2/(k+2)$ give $O(k^{-1})$ distance. A matching
lower bound shows that this distance guarantee is worst-case
order-optimal among prescribed admissible step-size schedules.
We also obtain rates $O(k^{-\min\{\alpha,1\}})$ for the Frank--Wolfe gap
and $O(k^{-\min\{\alpha,2\}})$ for the dual gap. These yield
$O(\varepsilon^{-1})$ and $O(\varepsilon^{-1/2})$ calls to each oracle
for sustained gap accuracy $\varepsilon$ when $\alpha\geq1$ and
$\alpha\geq2$, respectively, with instance-dependent constants.
We also treat more general prescribed step-size schedules.
\end{abstract}

\section{Introduction}\label{sec:introduction}
Let $K$ be a compact polytope in a finite-dimensional Euclidean space $E$
and let $\Phi\colon K\to E$ be a monotone mapping. We study the simplest
projection-free method for the variational inequality
\begin{equation}\label{eq:vi}
 \text{Find }x^*\in K\text{ such that }\ip{\Phi(x^*)}{s-x^*}\geq0\qquad\forall s\in K,
\end{equation}
namely the Frank--Wolfe (FW) or conditional gradient iteration with a
vertex-returning linear minimization oracle and prescribed step sizes,
\begin{equation}\label{eq:FW-intro}
 \begin{split}
 x_{k+1}&=x_k+\gamma_k(s_k-x_k),\qquad \gamma_k=\frac{\alpha}{k+\beta},\\
 s_k&\in\operatorname{vert}(K)\cap\argmin_{s\in K}\ip{\Phi(x_k)}s,
       \qquad k\geq0.
 \end{split}
\end{equation}
Each update evaluates $\Phi$ once, minimizes a linear function over $K$
once, and moves toward the returned vertex. This is useful when linear
minimization is substantially cheaper than projection onto $K$. The
iterate stays feasible, and after $k$ updates from a vertex it is a
convex combination of at most $k+1$ vertices, see Jaggi~\cite{Jaggi2013} and Pokutta~\cite{Pokutta2024}. When $\Phi=\nabla f$ for a convex $f$,
\eqref{eq:FW-intro} is the classical Frank--Wolfe method with open-loop
steps.

When $K$ is a product
of simplices and $\Phi$ is the payoff mapping of a bilinear zero-sum
game, \eqref{eq:FW-intro} with $\gamma_k=1/(k+1)$ corresponds to  simultaneous
\emph{fictitious play}, where each player best-responds to the empirical average of
past play, and $x_k$ is that average.
This dynamic has received substantial interest in game theory, starting with the early works by~Brown~\cite{Brown51} and Robinson~\cite{robinson1951}.
Hammond \cite{Hammond1985} introduced \emph{generalized fictitious play} for general 
variational inequalities and conjectured convergence of this harmonic
iteration for strongly monotone mappings on polytopes. In monotone
population games, Hadikhanloo et al.\ \cite{Hadikhanloo2021} established
qualitative convergence of the harmonic empirical-state iteration.
Very recently, Hough
\cite{Hough2026} proved the conjecture and showed that for monotone
$C^1$ mappings on compact convex sets and vanishing, nonsummable steps,
the Frank--Wolfe gap and the distance to the solution set
tend to zero. Under strong monotonicity the iterates converge to the
unique solution. Hough also proves quantitative rates for strongly convex
feasible sets. For general polytopes, his result is qualitative and
quantitative rates remain open in his analysis.

This paper proves last-iterate distance rates for the direct vertex-returning
FW iteration with nonlinear monotone $C^1$ operators on compact polytopes.
We assume coercivity of the Jacobian only in directions parallel to the
face exposed by the operator at the solution. This condition is strictly
weaker than global strong monotonicity and permits boundary solutions.
For prescribed steps $\gamma_k=\alpha/(k+\beta)$,
$\beta\geq\alpha>0$, we prove
$\norm{x_k-x^*}=O(k^{-\min\{\alpha,1\}})$, uniformly over starting points
and vertex tie-breaking. In particular, both $1/(k+1)$ and $2/(k+2)$
give $O(k^{-1})$ distance. The guarantee is tight among prescribed
positive, vanishing, nonsummable schedules: for every such schedule,
there exists a two-dimensional quadratic problem admitting a trajectory that
is not $o(k^{-1})$. More precisely, the lower bound combines oscillation
on the scale of the current step with the weight retained after the last
unit step. For eventually monotone rescaled weights, it matches the
upper bound up to constants over two consecutive iterates (see Section~\ref{sec:optimality}). The distance estimate also
yields rates for both VI gaps and corresponding oracle-call bounds.

Two features make these quantitative bounds difficult. On a polytope,
a vertex-returning oracle can switch discontinuously as the score
$\Phi(x_k)$ changes, so the oracle regularity used for rates on strongly
convex sets is unavailable. Moreover, $\Phi$ need not be a gradient,
and objective descent is then unavailable. Monotonicity alone does not
give the claimed rate: fictitious play can have arbitrarily slow
polynomial gap decay, even with unique best responses after
initialization \cite{Wang2025,AbernethyLazarsfeldWibisono2026}.
Our additional coercivity assumption supplies the local control used to
obtain the distance rate. The proof combines qualitative attraction with
an exact rescaling to a weighted additive update dynamic. Such dynamics have been recently
analyzed in Harks~\cite[Theorem~1 and Corollary~1]{RobustCycling}, where the
theorem bounds the state norms generated by these updates; the recursion and
this boundedness result are explained in Section~\ref{subsec:proof-technique}.

\subsection{Our Contribution}\label{subsec:contribution}
The following theorem is our main result on last-iterate convergence.
\begin{theorem}[Uniform last-iterate rate]\label{thm:app-fw}
Suppose $\Phi$ is monotone on $K$ and $C^1$ near $K$, and its Jacobian
at a VI solution $x^*$ is coercive in directions parallel to the exposed
face $F_*:=\argmin_{s\in K}\ip{\Phi(x^*)}s$, in the sense of
\eqref{eq:face-coercivity}. Then $x^*$ is unique and there
is a finite constant
$C_{\rm FW}=C_{\rm FW}(K,\Phi,\alpha,\beta)$ such that every trajectory of
\eqref{eq:FW-intro} satisfies
\[
 \norm{x_T-x^*}\leq C_{\rm FW}T^{-\min\{\alpha,1\}}\qquad(T\geq1).
\]
The constant is independent of the starting point and all vertex tie-breaking
choices. In
particular, every fixed $\alpha\geq1$ gives an $O(T^{-1})$ distance rate.
\end{theorem}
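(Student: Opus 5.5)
The proof runs in three stages: uniqueness and qualitative attraction, local linearization around $x^*$, and an exact rescaling to a weighted additive dynamic whose boundedness gives the rate.

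\textbf{Uniqueness and qualitative attraction.} First I show uniqueness of $x^*$. Let $y^*$ be another solution. Monotonicity together with the two VI inequalities forces $\ip{\Phi(y^*)-\Phi(x^*)}{y^*-x^*}=0$ and $\ip{\Phi(x^*)}{y^*-x^*}=0$, so $y^*\in F_*$. The segment $[x^*,y^*]$ then consists of solutions, and $x^*+t(y^*-x^*)$ is a solution for all small $t>0$. Differentiating the monotonicity identity along this segment gives $\ip{D\Phi(x^*)(y^*-x^*)}{y^*-x^*}=0$. This contradicts the coercivity \eqref{eq:face-coercivity}, since $y^*-x^*$ is parallel to $F_*$. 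With uniqueness in hand, I invoke the qualitative result of Hough (the FW gap and the distance to the solution set tend to zero for vanishing nonsummable steps) to get $x_k\to x^*$. I also need this uniformly: there must be an index $k_0(\delta)$, independent of the start and of tie-breaking, after which $\norm{x_k-x^*}\le\delta$. For this I would use a compactness argument over starting points. Alternatively, I would use a Lyapunov-type bound on $\ip{\Phi(x_k)}{x_k-x^*}$ whose decay depends only on $\sum\gamma_k$ and $\max_{j\ge k}\gamma_j$.

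\textbf{Local structure.} Inside a small ball around $x^*$, the oracle returns only vertices of $F_*$. Vertices outside $F_*$ have score strictly larger by some margin $\eta>0$, which survives small perturbations of $\Phi$. Write $x_k-x^*=u_k+w_k$, where $u_k$ is parallel to $\operatorname{aff}F_*$ and $w_k$ is the transversal part. The transversal part decays like the product $\prod(1-\gamma_j)$, which is $O(k^{-\alpha})$. The parallel part evolves as FW for the linearized operator $\Phi(x^*)+D\Phi(x^*)(x-x^*)$ restricted to $F_*$, plus an $O(\norm{x_k-x^*}^2)$ error.

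\textbf{Rescaling and the rate.} Set $W_k:=\prod_{j<k}(1-\gamma_j)^{-1}\asymp k^{\alpha}$ and $z_k:=W_k(x_k-x^*)$. The update becomes additive:
\[
z_{k+1}=z_k+W_{k+1}\gamma_k(s_k-x^*).
\]
The vertex $s_k$ minimizes the score $\ip{\Phi(x_k)}{s}$, which by face coercivity is an approximately coercive linear functional in the current state $z_k/W_k$. This is exactly the weighted additive update dynamic of \cite[Theorem~1 and Corollary~1]{RobustCycling}, with increments of size $W_{k+1}\gamma_k\asymp k^{\alpha-1}$. That result bounds $\norm{z_k}$ by a constant multiple of the accumulated increment scale plus the initial state, i.e.\ by $O(\max\{k^{\alpha-1},1\})$ plus the contribution of $z_{k_0}$. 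Dividing by $W_k$ yields $\norm{x_k-x^*}=O(k^{-1})$ when $\alpha\ge1$. When $\alpha<1$ it yields $O(k^{-\alpha})$, the initial term dominating. For $k\le k_0$ the bound holds trivially with $C_{\rm FW}\ge\operatorname{diam}(K)k_0$.

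\textbf{Main obstacle.} The hard step is controlling the nonlinear perturbation inside the rescaled dynamic. The quadratic error $O(\norm{x_k-x^*}^2)$ enters the score and can flip vertex choices. I would absorb it with a bootstrap: assume a priori $\norm{x_k-x^*}\le\delta$. Then the error is at most $\delta$ times the coercive term, so the robust version of the cycling theorem, which tolerates perturbed tie-breaking, applies with constants depending only on $\delta$. This requires that Corollary~1 allow score perturbations of relative size $\delta$, which I would verify against its hypothesis.
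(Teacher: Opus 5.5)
Your overall architecture matches the paper's: uniqueness, identification of the selected face, rescaling to an additive recursion, then weighted cycling. Your uniqueness argument via convexity of the monotone solution set is also fine. The genuine gap is the \emph{uniform} attraction step, on which the uniformity of $C_{\rm FW}$ rests. Your bound carries $\rho_{k_0}\operatorname{diam}(K)$, so the entry index $k_0$ into the identification/Taylor neighborhood must be common to all starts and tie-breakings. A compactness argument over starting points does not supply this. Trajectories do not depend continuously on $x_0$, because the oracle is discontinuous and ties are arbitrary. Take a contradiction sequence $x^j$ with $\norm{x^j_{k_j}-x^*}\geq\delta$ and $k_j\to\infty$. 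An index-by-index limit of the $x^j$ is again a trajectory and converges by Hough's theorem, but it says nothing about the late indices $k_j$ where the failure occurs. The Lyapunov alternative is also unsupported. Consecutive selected vertices can differ, so the elementary discrete estimate for the Frank--Wolfe gap is only $V(x_{k+1})\leq(1-\gamma_k)V(x_k)+O(\gamma_k)$, which gives no decay; the monotonicity term behind $\dot V\leq-V$ survives only in continuous time. The missing idea is to recenter time at the bad indices. Let $\sigma_k=\sum_{i<k}\gamma_i$ and let $X_j$ be the piecewise affine interpolation with $X_j(\sigma_k)=x^j_k$. By compactness for approximate differential inclusions, the shifted paths $t\mapsto X_j(t+\sigma_{k_j})$ have a subsequence converging locally uniformly to a \emph{complete} solution $X$ on $\R$ of $\dot x\in\argmin_{s\in K}\ip{\Phi(x)}{s}-x$. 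Exponential gap decay on $[-T,0]$ gives $V(X(0))\leq e^{-T}\max_{x\in K}V(x)$ for every $T>0$. Hence $X(0)=x^*$, contradicting $\norm{X(0)-x^*}\geq\delta$. Your index-by-index compactness could instead be rescued by a local trapping estimate showing that closeness to $x^*$ at a late index persists (for example, from the cycling bound itself), but that would also have to be proved.

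Two steps of your local analysis also need repair. First, the transversal part is not a higher-order error. After identification, the normal component $q$ of the rescaled state is constant and adds the constant vector $c=P_*D\Phi(x^*)q$ to the rescaled tangential scores, where $P_*$ is the orthogonal projection onto $T_*=\operatorname{lin}(F_*-F_*)$. This term is not small, not even relative to the state, so a relative-perturbation form of the cycling theorem would not absorb it. The paper removes it with the shift $\widetilde y_k=P_*z_k+A_*^{-1}c$, where $A_*=P_*D\Phi(x^*)|_{T_*}$ is invertible by \eqref{eq:face-coercivity}. It then applies the theorem on $T_*$ with directions $\operatorname{vert}(F_*)-x^*$; counting the bounded term $\ip{c}{u_k-u}$ in the additive tolerance would also work. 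Second, the cycling theorem as used in the paper takes only an additive tolerance $B$, but its constant $C_w$ is independent of $B$ and of the number of updates. That independence is what handles the rescaled Taylor error, which under $C^1$ is $\omega(\norm{x_k-x^*})\norm{z_k}$ with $\omega\to0$, not $O(\norm{x_k-x^*}^2)$. On a window $N\leq k\leq m$, put $M_m=\max_{N\leq k\leq m}\norm{\widetilde y_k}$. The tolerance is at most $b+\eta M_m$, so $M_m\leq C_w\bigl(\max_{N\leq k<m}\rho_{k+1}\gamma_k+\norm{\widetilde y_N}+b+\eta M_m\bigr)$. Choosing the attraction neighborhood so that $C_w\eta<1$ lets you absorb the last term, with a bound independent of $m$. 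No bootstrap is needed, because the a priori smallness of $\norm{x_k-x^*}$ comes from uniform attraction. Finally, start your product $\prod_j(1-\gamma_j)^{-1}$ at $j=1$, since $\gamma_0=1$ when $\beta=\alpha$.
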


The exponent is determined solely by the step-size parameter; dependence
on the problem instance enters through the constant.

For a $C^1$ globally strongly monotone mapping on any compact polytope,
Theorem~\ref{thm:app-fw} gives the distance guarantee $O(k^{-1})$ for
$1/(k+1)$ and $2/(k+2)$; this is the quantitative form of Hammond's
conjecture. The mapping may be nonlinear, its derivative nonsymmetric,
and the solution degenerate on the boundary. 
As an application, consider atomic splittable network congestion games. Their strategy sets are flow polytopes, and under the operator conditions in~\cite{harks2026} the marginal-cost mapping satisfies the hypotheses of Theorem~\ref{thm:app-fw}, so the harmonic vertex method converges at rate $O(k^{-1})$.

\paragraph{Matching Lower Bounds.}
The following result shows that the distance rate cannot be improved
by choosing a different prescribed schedule. Its lower bound combines
two mechanisms on one strongly convex quadratic problem: vertex
selections force oscillation on the scale of the current step, while a
boundary coordinate retains a positive fraction of an earlier error.
We start the comparison at an index $n\geq1$ after all unit steps, so
this retained fraction is the positive product
$\prod_{j=n}^{k-1}(1-\gamma_j)$. Such an index exists because
$\gamma_k\to0$. Changing $n$ changes constants, but not asymptotic orders.

\begin{theorem}[Matching lower bounds]
\label{thm:schedule-characterization}
Fix a deterministic schedule with $0<\gamma_k\leq1$, $\gamma_k\to0$,
and $\sum_k\gamma_k=\infty$. Choose $n\geq1$ such that $\gamma_k<1$
for all $k\geq n$, and define
\begin{equation}\label{eq:schedule-normalization}
 \rho_n=1,\qquad \rho_{k+1}=\frac{\rho_k}{1-\gamma_k},\qquad
 w_k=\rho_{k+1}\gamma_k,\qquad
 q_k=\max\{\gamma_k,\rho_k^{-1}\}.
\end{equation}
On the fixed instance $K=[0,1]^2$, $\Phi(\xi,\eta)=(\xi-1/2,\eta)$,
there are a starting point, exact vertex selections, and a constant
$c>0$, depending only on the schedule and $n$, such that
\[
 \Delta_k:=\max\{\norm{x_k-x^*},\norm{x_{k+1}-x^*}\}
 \geq c q_k\qquad\text{for all sufficiently large }k.
\]
This trajectory is not $o(k^{-1})$. Consequently, the $O(k^{-1})$
distance guarantee in Theorem~\ref{thm:app-fw} cannot be improved to
$o(k^{-1})$ uniformly over problems by any such prescribed schedule.
\end{theorem}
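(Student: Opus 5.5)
The instance separates into two decoupled one-dimensional problems, and the plan is to exploit each coordinate for one of the two terms in $q_k$. The solution is $x^*=(1/2,0)$, and the linear minimization oracle for the score $\Phi(x_k)=(\xi_k-1/2,\eta_k)$ splits coordinatewise. In the first coordinate it returns $1$ if $\xi_k<1/2$, returns $0$ if $\xi_k>1/2$, and may return either vertex at a tie. In the second coordinate it returns $0$ whenever $\eta_k>0$, and either vertex if $\eta_k=0$. The lower bound $\Delta_k\ge c\,q_k$ then follows from two separate bounds,
\[
\max\{|\eta_k-0|,|\eta_{k+1}|\}\ge c\rho_k^{-1},\qquad \max\{|\xi_k-1/2|,|\xi_{k+1}-1/2|\}\ge \gamma_k/2,
\]
since $\norm{x-x^*}$ dominates each coordinate error.

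For the boundary coordinate, start with $\eta_n=1$ and always select $0$ in the second coordinate; this is exact because $\eta_k>0$ along the trajectory. Then $\eta_{k+1}=(1-\gamma_k)\eta_k$, so $\eta_k=\prod_{j=n}^{k-1}(1-\gamma_j)=\rho_k^{-1}$ for $k\ge n$, and this is positive because $\gamma_j<1$ for $j\ge n$. For $k<n$ we can take the earlier steps arbitrary, or simply start the trajectory at time $n$ after inserting a prefix; the cleanest choice is to pick an $x_0$ whose image at time $n$ is $(\xi_n,1)$. If $\gamma_j=1$ occurs before $n$, the point after that step is the selected vertex, so I would arrange $\eta=1$ and $\xi$ equal to a vertex at the last unit step, and run any exact selections afterwards. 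This gives the retained-weight term with $c=1$.

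For the interior coordinate, write $e_k=\xi_k-1/2$. The update is $e_{k+1}=(1-\gamma_k)e_k+\gamma_k(s_k-1/2)$ with $s_k\in\{0,1\}$, where $s_k-1/2=-\tfrac12\operatorname{sign}(e_k)$ whenever $e_k\neq0$. Hence $e_{k+1}-e_k=\gamma_k\bigl(s_k-1/2-e_k\bigr)$. When $e_k\neq 0$ the two terms $-\tfrac12\operatorname{sign}(e_k)$ and $-e_k$ have the same sign, so $|e_{k+1}-e_k|\ge\gamma_k/2$. At a tie $e_k=0$ the selection gives $|e_{k+1}|=\gamma_k/2$ directly. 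In either case $|e_k|+|e_{k+1}|\ge\gamma_k/2$, hence $\max\{|e_k|,|e_{k+1}|\}\ge\gamma_k/4$. This step is the oscillation mechanism, and it holds for every exact selection.

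Combining the two bounds gives $\Delta_k\ge\max\{\gamma_k/4,\rho_k^{-1}\}\ge q_k/4$ for all $k\ge n$. It remains to show that this is not $o(k^{-1})$, and I expect this to be the main obstacle. Suppose instead that $\gamma_k=o(1/k)$ and $\rho_k^{-1}=o(1/k)$. Then $\log\rho_k=-\sum_{j=n}^{k-1}\log(1-\gamma_j)=\sum_{j=n}^{k-1}\gamma_j(1+o(1))=o(\log k)$, since a Cesàro-type estimate gives $\sum_{j\le k}o(1/j)=o(\log k)$. But $\rho_k^{-1}=o(1/k)$ forces $\log\rho_k\ge\log k-O(1)$, which is a contradiction. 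Hence $q_k\ne o(1/k)$, so $\Delta_k$, and therefore one of $\norm{x_k-x^*}$ and $\norm{x_{k+1}-x^*}$, fails to be $o(k^{-1})$ along a subsequence. The consequence for Theorem~\ref{thm:app-fw} follows because this instance is $C^1$ and strongly monotone, so it satisfies the theorem's hypotheses.
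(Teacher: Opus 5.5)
Your proof is correct in substance and shares the paper's architecture. It uses the same decoupled instance, the vertical coordinate as the memory term $\eta_k=\eta_n\rho_k^{-1}$, and the horizontal coordinate as the oscillation term. Your argument excluding $o(k^{-1})$ is essentially the paper's logarithmic estimate: the paper shows $\eta_k\geq c_\varepsilon k^{-\varepsilon}$ via $\log(1-t)\geq-2t$, and your Ces\`aro form $\log\rho_k=o(\log k)$ is the same argument.

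The real difference is the oscillation step. The paper invokes the uniform attraction result, Lemma~\ref{lem:uniform-fw}, to get $\xi_k\to1/2$. Then $|a_k-\xi_k|\geq1/4$ eventually for whichever vertex is chosen, and the bound $\gamma_k/8$ holds only for large $k$. You use instead that the exact oracle always returns the horizontal vertex on the far side of $1/2$. Hence $|a_k-\xi_k|=\tfrac12+|\xi_k-\tfrac12|\geq\tfrac12$ at every step, which gives $\max\{|e_k|,|e_{k+1}|\}\geq\gamma_k/4$ for every $k$ with no differential-inclusion input. Your route is self-contained and explicit. The paper's route uses only convergence, not the sign structure of the selections, so it would also cover non-greedy or inexact first-coordinate choices along converging trajectories.

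Two repairs are needed.

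First, $\eta_n=1$, and hence $c=1$, cannot be arranged in general. For $0<\gamma_k<1$, $\eta_{k+1}=(1-\gamma_k)\eta_k+\gamma_kb_k$ equals $1$ only if $\eta_k=b_k=1$, yet $b_k=1$ is an exact choice only when $\eta_k=0$. Nor can the prefix be arbitrary: a unit step taken with $b_h=0$ sets $\eta_{h+1}=0$, and tie choices $b_k=0$ keep it there. Use the paper's construction instead.
\begin{itemize}
\item If there is no unit step, start at $\eta_0=1$. This gives $\eta_n=\prod_{j<n}(1-\gamma_j)>0$.
\item Otherwise let $h<n$ be the last unit step. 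Start at $\eta_0=0$ and select $b_k=0$ for $k<h$, so that $b_h=1$ is an exact tie choice. This gives $\eta_n=\prod_{h<j<n}(1-\gamma_j)>0$.
\end{itemize}
In both cases $c=\min\{1/4,\eta_n\}$ depends only on the schedule and $n$.

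Second, the bound $\gamma_k/2$ announced in your first paragraph is too strong. For $\gamma_k=2/(k+2)$, every trajectory has $|e_k|=|e_{k+1}|=1/(2k+2)<\gamma_k/2$ at even $k\geq2$. The $\gamma_k/4$ you actually prove is the correct statement and suffices.
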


Here $\rho_k^{-1}=\prod_{j=n}^{k-1}(1-\gamma_j)$ measures how much of
the state at time $n$ remains at time $k$. Thus $q_k$ compares the size
of a new step with the retained weight of an earlier state. The
maximum over two consecutive iterates captures oscillation: an iterate
may be unusually close to the solution even when the next one is not.

For the reciprocal schedules of Theorem~\ref{thm:app-fw}, this lower
bound matches that theorem's rate. More generally, smaller steps reduce
vertex oscillation but preserve the influence of earlier iterates for
longer. The upper and lower bounds agree up to constant factors whenever
the rescaled weights are eventually monotone. The precise bounds for
general schedules and their comparison with the lower bound are given
in Corollary~\ref{cor:general-schedules} and
Section~\ref{sec:optimality}.

\paragraph{Gap accuracy and oracle calls.}
We further investigate the Frank--Wolfe gap and the dual (Minty) gap
along FW-trajectories:
\begin{equation}\label{eq:vi-gaps}
 V(x):=\max_{s\in K}\ip{\Phi(x)}{x-s},\qquad
 G(x):=\max_{s\in K}\ip{\Phi(s)}{x-s}.
\end{equation}
Monotonicity gives $0\leq G\leq V$, and both vanish exactly at solutions.
The distance bound controls $V$ linearly. For $G$, face coercivity yields
quadratic growth of $s\mapsto\ip{\Phi(s)}{s-x^*}$, and the linear term
$\ip{\Phi(x^*)}{x_k-x^*}$ decays exactly at the retained-weight rate
$k^{-\alpha}$ once the selected vertices lie in $F_*$. For steps
$\alpha/(k+\beta)$ this gives (Corollary~\ref{cor:gap-distance})
\[
 V(x_k)=O(k^{-\min\{\alpha,1\}}),\qquad
 G(x_k)=O(k^{-\min\{\alpha,2\}}).
\]
Since each update costs one evaluation of $\Phi$ and one vertex-LMO call,
Table~\ref{tab:accuracy-complexity} translates the best exponents into
oracle-call bounds for \emph{sustained accuracy}, meaning that the
tolerance holds for every subsequent iterate. Both gap exponents are
tight over two consecutive iterates (Proposition~\ref{prop:gap-sharpness}),
and the sustained-accuracy orders are optimal among prescribed schedules
(Corollary~\ref{cor:oracle-tail}). Notably, $\alpha\geq2$ improves the
worst-case dual-gap guarantee over harmonic steps, although the distance
order is the same.

\begin{table}[H]
\centering
\small
\renewcommand{\arraystretch}{1.25}
\begin{tabular}{@{}lccc@{}}
\hline
Accuracy criterion & Step condition & Rate & \shortstack{Calls to each\\oracle}\\
\hline
$\norm{x_k-x^*}\leq\varepsilon$ & $\alpha\geq1$ & $O(k^{-1})$ & $O(\varepsilon^{-1})$\\
$V(x_k)\leq\varepsilon$ & $\alpha\geq1$ & $O(k^{-1})$ & $O(\varepsilon^{-1})$\\
$G(x_k)\leq\varepsilon$ & $\alpha\geq2$ & $O(k^{-2})$ & $O(\varepsilon^{-1/2})$\\
\hline
\end{tabular}
\caption{Accuracy and oracle-call guarantees for
$\gamma_k=\alpha/(k+\beta)$, $\beta\geq\alpha$, on admissible instances.
Calls are counted separately for mapping evaluations and
vertex-returning linear minimization. The constants depend on the
instance and the schedule and are uniform over initial points and vertex
choices.}
\label{tab:accuracy-complexity}
\end{table}

The rate exponents hold across all admissible instances, while the
implicit constants may depend on $K$, $\Phi$, and the schedule.
These bounds do not provide class-uniform minimax complexity guarantees;
Section~\ref{sec:oracle-counts} explains this distinction using the
standard simplex obstruction.
Also, $V(x_k)$ is an observable stopping certificate obtained from the
same oracle pair, whereas the bound on $G(x_k)$ is a guarantee on the
returned iterate and not a procedure for evaluating $G$.

\subsection{Proof Technique}\label{subsec:proof-technique}
The proof has three stages. The first follows Hough's interpolation
and limit-set approach \cite[Section~2]{Hough2026}, within the
established differential-inclusion framework of Bena\"im, Hofbauer, and
Sorin \cite{BenaimHofbauerSorin2005}; related interpolation arguments for
fictitious play appear in \cite[Section~5.1]{Hadikhanloo2021}.
We adapt it to obtain attraction
to $x^*$ uniformly over all starting points and vertex tie-breaking
choices. Time-shifted
interpolations of the iterates have limit trajectories of the
associated differential inclusion, and exponential decay of the
Frank--Wolfe gap along these limits yields a common entry time into
every neighborhood of the solution. Since the vertex set is finite, all
selected vertices then belong to the exposed face $F_*$ from a common
index onward. The iterates themselves need not enter that face in finite
time.

The second stage is algebraic. An exact rescaling turns the averaging
recursion into an additive one: with $\rho_{k+1}(1-\gamma_k)=\rho_k$ and
$w_k=\rho_{k+1}\gamma_k$, the state $z_k=\rho_k(x_k-x^*)$ satisfies
\[
 z_{k+1}=z_k+w_k(s_k-x^*).
\]
After vertex-face identification, the component of $z_k$ normal to the
face's direction space $T_*:=\operatorname{lin}(F_*-F_*)$ is constant.
The linearized oracle scores depend on $A_*y_k+c$, where $y_k\in T_*$
is the tangential component, $A_*$ is the Jacobian restricted to $T_*$
and projected onto $T_*$, and $c$ is constant. Coercivity makes $A_*$
invertible. The substitution $\widetilde y_k=y_k+A_*^{-1}c$ gives
$A_*\widetilde y_k=A_*y_k+c$ and preserves the additive update.
With $U_*:=\operatorname{vert}(F_*)-x^*$, the reduced update has the form
\[
 \widetilde y_{k+1}=\widetilde y_k+w_ku_k,\qquad
 u_k\in U_*,\qquad
 \ip{A_*\widetilde y_k}{u_k}
 \leq\min_{u\in U_*}\ip{A_*\widetilde y_k}u+b_k,
\]
where $b_k\geq0$ accounts for the nonlinear Taylor remainder.
We call this a \emph{weighted approximate perceptron recursion}: each
update adds a direction whose linear score is within $b_k$ of the
minimum. For affine mappings, $b_k=0$.

The third stage applies the robust weighted perceptron cycling theorem
of Harks~\cite[Theorem~1 and Corollary~1]{RobustCycling}, stated as
Theorem~\ref{thm:weighted} below. The name refers to a boundedness
result for these additive updates; periodic repetition is not required.
For a fixed starting index $N$ and any later index $m$, it gives
\[
 \max_{N\leq k\leq m}\norm{\widetilde y_k}
 \leq C_w\left(\norm{\widetilde y_N}
       +\max_{N\leq k<m}w_k+\max_{N\leq k<m}b_k\right).
\]
The constant $C_w$ is independent of $m$ and the tolerances. Near the
solution, the Taylor contribution to $b_k$ is bounded by an arbitrarily
small multiple of $\norm{\widetilde y_k}$ plus a fixed constant.
Choosing the neighborhood small enough lets us move this contribution
to the left-hand side. The resulting bound holds with the same constant
for every $m>N$, starting from the same index $N$. No Lipschitz
assumption on $D\Phi$ or prior rate of attraction is needed.

Figure~\ref{fig:proof-roadmap} illustrates the geometric reduction on an
affine example. In subfigure~\subref{subfig:face-entry}, the selected
vertices lie in $F_*$ while the iterates approach it. In
subfigure~\subref{subfig:tangential-cycling}, the rescaled and shifted
state adds either $-1/2$ or $1/2$ and eventually alternates between
$0$ and $1/2$. This is a periodic example of the additive recursion;
the theorem controls general trajectories without assuming such
periodicity. Nonlinear mappings introduce the tolerance $b_k$ described
above.

\input{proof_roadmap.tex}

The dual-gap bound needs one further ingredient: face coercivity also
yields quadratic growth of $s\mapsto\ip{\Phi(s)}{s-x^*}$, giving
\[
 G(x)\leq\ip{\Phi(x^*)}{x-x^*}+C\norm{x-x^*}^2.
\]
After face identification, the first term decays exactly at the
retained-weight rate $k^{-\alpha}$. Combining it with squared distance
gives the exponent $\min\{\alpha,2\}$ and explains the
$O(\varepsilon^{-1/2})$ oracle-call bound for $\alpha\geq2$.

\subsection{Organization}
Section~\ref{sec:related} reviews related work.
Section~\ref{sec:analysis} proves the last-iterate bound, and
Section~\ref{sec:consequences} develops general schedules, gap bounds,
matching lower bounds, and their oracle-count interpretation.
Section~\ref{sec:outlook} discusses extensions to inexact oracles and
bounded delays, limitations, and open problems, including possible
extensions beyond polytopes.
Appendix~\ref{sec:compactness} contains the differential-inclusion details,
and Appendix~\ref{app:extension-proofs} gives the proofs of the extensions.

\section{Related Work}\label{sec:related}
\paragraph{Variational inequalities with LMOs.}
Juditsky and Nemirovski \cite{JuditskyNemirovski2016} consider  linear minimization oracles over $K$.
Their algorithm uses a representation of $\Phi$ to construct an
auxiliary, dual VI on a domain where proximal steps are tractable.
They solve that VI with Mirror Descent or Mirror-Prox, calling the
linear oracle over $K$ to obtain primal points. Their Theorem~1
recovers a weighted combination of those points with a certified bound
on the dual VI gap $G$ in \eqref{eq:vi-gaps}.
Thus both methods return combinations of linear-oracle outputs, but
choose them differently: their query directions and averaging weights
come from the auxiliary algorithm, whereas \eqref{eq:FW-intro} queries
$\Phi(x_k)$ and uses the prescribed step $\gamma_k$. Their framework
provides quantitative VI solvers without our face-coercivity assumption;
our result gives distance and gap rates for the direct FW iteration
under the additional polytopal and coercivity hypotheses.

Other approaches combine proximal and linear-oracle steps
\cite{HeHarchaoui2015,ChenEtAl2020}, study FW for saddle-point problems
\cite{GidelEtAl2017}, or use inner FW loops to solve minimax
subproblems \cite{RahimiEtAl2026}. In methods with inner loops, an outer
iteration can require many LMO calls. Chen and Mazumdar
\cite{ChenMazumdar2024} obtain last-iterate rates for a generalized FW
method with a smoothed oracle.

\paragraph{Mirror methods and last-iterate rates.}
Mertikopoulos and Staudigl \cite{MertikopoulosStaudigl2018VI} establish
gap rates for averaged trajectories of stochastic mirror-descent dynamics
in monotone VIs. Azizian, Iutzeler, Malick, and Mertikopoulos analyze local
last-iterate rates for stochastic optimistic mirror descent
\cite{AzizianEtAl2021} and deterministic Bregman proximal methods
\cite{AzizianEtAl2024}. They relate the rates to the geometry of the
regularizer near the solution and, in the latter work, to sharp directions
at boundary solutions. Their rate analyses use proximal updates and
Bregman geometry. We obtain Euclidean distance and VI-gap bounds
for vertex-returning FW under coercivity along the exposed face.

\paragraph{Fictitious play and best-response dynamics.}
For an $m\times n$ zero-sum game, Shapiro \cite{Shapiro1958} gives an
$O(k^{-1/(m+n-2)})$ gap bound. Daskalakis and Pan
\cite{DaskalakisPan2014} obtain $\Omega(k^{-1/n})$ on the $n\times n$
identity game with adversarial tie-breaking, while Abernethy, Lai, and
Wibisono \cite{AbernethyLaiWibisono2021} prove $O(k^{-1/2})$ for positive
diagonal payoffs with lexicographic tie-breaking. Wang \cite{Wang2025}
gives an $\Omega(k^{-1/3})$ example without ties after initialization;
Abernethy, Lazarsfeld, and Wibisono
\cite{AbernethyLazarsfeldWibisono2026} extend this to arbitrarily slow
polynomial gap rates. These examples have skew game operators, for
which $\ip{D\Phi h}h=0$, and hence fail our coercivity assumption when
$T_*\ne\{0\}$. Their saddle-point gap equals both $V$ and $G$; it is
not a distance estimate.

The continuous-time analysis draws on best-response dynamics
\cite{HofbauerSorin2006}, the gap inequality $\dot V\leq-V$ for
monotone VIs \cite{ChenKA24}, and the differential-inclusion
framework of Bena\"im, Hofbauer, and Sorin
\cite{BenaimHofbauerSorin2005}. Hadikhanloo, Laraki, Mertikopoulos, and
Sorin \cite[Theorems~4.1 and~5.1]{Hadikhanloo2021} prove qualitative
convergence of best-response dynamics and fictitious play in monotone
population games. Their empirical state is the harmonic FW iterate on
a simplex, and their discrete-time proof uses the same differential-inclusion
framework. Hough \cite[Lemma~5 and Theorem~6]{Hough2026}
uses interpolation and complete trajectories in the limit set to prove
convergence of direct FW. Section~\ref{sec:attraction} adapts this
established method to obtain a common attraction time across starting
points and exact oracle selections.

\paragraph{Herding and perceptron cycling.}
Gelfand et al.\ \cite{GelfandEtAl2010} use classical perceptron cycling
to prove $O(k^{-1})$ moment error for finite herding, including boundary
targets and nongreedy selections satisfying a sign condition. Bach,
Lacoste-Julien, and Obozinski \cite{BachEtAl2012} establish the
herding--conditional-gradient correspondence. The harmonic case
$\Phi(x)=x-\mu$, $\mu\in K$, therefore has a direct precedent.
Classical effective bounds and the geometry of herding constants are
studied in \cite{AmaldiHauser2005,HarveySamadi2014}.
Harks \cite{RobustCycling} proves the robust weighted cycling estimate
used here and derives affine FW bounds for relative-interior solutions.
Our reduction extends its application to nonlinear mappings and boundary
solutions through uniform attraction, a fixed change of tangential
coordinates, and a bound on the nonlinear Taylor remainder.

\paragraph{Rates in convex optimization.}
Wolfe \cite{Wolfe1970}, Gu\'elat and Marcotte
\cite{GuelatMarcotte1986}, and Garber \cite{Garber2020} analyze away-step
FW under strict complementarity; Garber's rate reflects the optimal
face's dimension. Our face identification concerns the selected
vertices and allows $x^*$ on the relative boundary of $F_*$.
For prescribed steps, Bach \cite[Proposition~2.2]{Bach2021} proves
$O(k^{-2})$ objective error for the unextrapolated update with step
$2/(k+1)$. His assumptions include three derivatives with bounded third
derivative, a positive-definite Hessian at the unique minimizer, and
relative interiority in a simplicial face exposed exactly by the optimal
gradient. Wirth, Kerdreux, and Pokutta \cite{WirthKerdreuxPokutta2023}
and Wirth, Pe\~na, and Pokutta
\cite{WirthPenaPokutta2025,WirthPenaPokutta2026,WirthPenaPokuttaPolytopes2026}
develop accelerated objective rates using growth conditions and
polytope geometry. These results explain why faster FW rates are
possible for potential mappings; our distance and VI-gap bounds permit
nonlinear operators without a potential.

\paragraph{Schedule lower bounds and oracle complexity.}
Pokutta's objective-error lower bound
\cite[Proposition~4.6]{Pokutta2024} contains the retained-weight product
$\prod_j(1-\gamma_j)$. Its coefficient vanishes on our quadratic example;
our distance lower bound combines retained weight with vertex
oscillation. Lan \cite{Lan2013} studies oracle lower bounds over classes
of problems. Section~\ref{sec:oracle-counts} explains how that complexity
notion differs from our tight bounds for prescribed schedules, whose
constants may depend on the instance.

\section{Proof of the Last-Iterate Bound}\label{sec:analysis}
Throughout, $\Phi$ is $C^1$ on an open neighborhood of $K$ and monotone
on $K$:
\[
 \ip{\Phi(x)-\Phi(y)}{x-y}\geq0\qquad\forall x,y\in K.
\]
Continuity and compact convexity give existence of a VI solution. Let
$x^*$ be a solution and define the face exposed by $\Phi(x^*)$ and its
direction space,
\[
 F_*:=\argmin_{s\in K}\ip{\Phi(x^*)}s,\qquad
 T_*:=\operatorname{lin}(F_*-F_*).
\]
We assume that there is $\mu_*>0$ with
\begin{equation}\label{eq:face-coercivity}
 \ip{D\Phi(x^*)h}h\geq\mu_*\norm h^2\qquad\forall h\in T_*\quad\text{(coercivity)}.
\end{equation}
We call this \emph{coercivity on the exposed solution face}.

\begin{lemma}[Uniqueness]\label{lem:unique}
A solution $x^*$ satisfying \eqref{eq:face-coercivity} is the unique VI
solution whenever $\Phi$ is monotone on $K$ and differentiable at $x^*$.
\end{lemma}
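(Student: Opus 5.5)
Let $\bar x\in K$ be any VI solution. The aim is to show $\bar x=x^*$. I would first use the two VI inequalities together with monotonicity to place $\bar x$ in the exposed face $F_*$. After that I would expand $\Phi$ to first order at $x^*$ along the segment $[x^*,\bar x]$ and apply coercivity \eqref{eq:face-coercivity} to the tangential direction $h=\bar x-x^*$.

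Step 1: show $\bar x\in F_*$ and $h\in T_*$. Testing the VI at $x^*$ with $s=\bar x$, and the VI at $\bar x$ with $s=x^*$, gives
\[
\ip{\Phi(x^*)}{\bar x-x^*}\geq0,\qquad \ip{\Phi(\bar x)}{x^*-\bar x}\geq0 .
\]
Monotonicity gives $\ip{\Phi(\bar x)-\Phi(x^*)}{\bar x-x^*}\geq0$. Adding the two displayed inequalities yields the reverse, so this inner product is exactly $0$. Consequently both displayed terms vanish. In particular $\ip{\Phi(x^*)}{\bar x}=\ip{\Phi(x^*)}{x^*}=\min_{s\in K}\ip{\Phi(x^*)}s$. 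Hence $\bar x\in F_*$, and since also $x^*\in F_*$, we get $h\in T_*$.

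Step 2: apply the same argument to points on the segment. For $t\in(0,1]$ let $x_t=x^*+th\in K$. Monotonicity between $x_t$ and $x^*$, and between $x_t$ and $\bar x$, gives
\[
\ip{\Phi(x_t)-\Phi(x^*)}{th}\geq0,\qquad \ip{\Phi(\bar x)-\Phi(x_t)}{(1-t)h}\geq0 .
\]
From Step 1, $\ip{\Phi(x^*)}h=0$ and $\ip{\Phi(\bar x)}h=0$. So for $t<1$ these inequalities give $\ip{\Phi(x_t)}h\geq0$ and $\ip{\Phi(x_t)}h\leq0$, hence $\ip{\Phi(x_t)}h=0$. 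By differentiability at $x^*$,
\[
0=\ip{\Phi(x_t)-\Phi(x^*)}h=t\ip{D\Phi(x^*)h}h+o(t).
\]
Dividing by $t$ and letting $t\downarrow0$ gives $\ip{D\Phi(x^*)h}h=0$. Coercivity on $T_*$ then forces $\mu_*\norm h^2\leq0$, so $h=0$ and $\bar x=x^*$.

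The main obstacle is Step 2: coercivity constrains only the derivative at $x^*$, so it must be linked to the other solution $\bar x$. Global monotonicity alone gives only $\geq0$, which is not enough. The key is that monotonicity forces $t\mapsto\ip{\Phi(x_t)}h$ to vanish identically on the whole segment, not just at its endpoints. This requires the interpolation in $t$ and the orthogonality facts from Step 1. Differentiability only at $x^*$ suffices because the expansion is taken only as $t\downarrow0$.
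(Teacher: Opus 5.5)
Your proposal is correct and follows essentially the same route as the paper. The paper places the second solution in $F_*$ using the two VI inequalities and monotonicity, notes that $t\mapsto\ip{\Phi(x^*+th)}h$ is nondecreasing with zero endpoint values and hence vanishes identically, and differentiates at $t=0$ to contradict coercivity. Your only change is to verify the vanishing on the segment directly, via the two monotonicity comparisons at $x_t$, instead of citing monotonicity of that scalar function.
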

\begin{proof}
Suppose $y$ is another solution and put $h=y-x^*$. The two VI inequalities
and monotonicity imply
\[
 0\leq\ip{\Phi(x^*)}h\leq\ip{\Phi(y)}h\leq0.
\]
Thus $y\in F_*$ and $h\in T_*$. Monotonicity along the segment
$x^*+th$, $0\leq t\leq1$, makes
$t\mapsto\ip{\Phi(x^*+th)}h$ nondecreasing. Its endpoint values are both
zero, so it is identically zero. Differentiating at $t=0$ gives
$0=\ip{D\Phi(x^*)h}h\geq\mu_*\norm h^2$, hence $y=x^*$.
\end{proof}

Global strong monotonicity implies \eqref{eq:face-coercivity}: differentiate
the strong-monotonicity inequality at relative-interior points of $K$ along
$T_*\subseteq\operatorname{lin}(K-K)$ and pass to $x^*$ by continuity of
$D\Phi$. The converse fails. For example, on $K=[-1,1]\times[0,1]$, the monotone map
$\Phi(x,y)=(x,1)$ has solution $(0,0)$ and exposed face
$[-1,1]\times\{0\}$. Its derivative is coercive on this face's direction
space, but vanishes in the vertical direction, so the map is not strongly
monotone on $K$.

In \eqref{eq:FW-intro} we fix $\beta\geq\alpha>0$, so that
$0<\gamma_k\leq1$. The starting point $x_0$ is any point of $K$. When
$\beta=\alpha$, the first step has size one and $x_1=s_0$; otherwise the
initial point retains positive weight, and our bounds are uniform over
all $x_0\in K$. The harmonic case $\alpha=\beta=1$ gives
$x_T=T^{-1}\sum_{k=0}^{T-1}s_k$, the plain average of the selected
vertices, while the standard schedule $2/(k+2)$ weights later selections
more heavily. In every case the estimate concerns the current iterate
$x_T$ itself.

For positive sequences $(a_k)$ and $(b_k)$, the notation $a_k\asymp b_k$
means that there are constants $0<c\leq C<\infty$ such that
$c\,b_k\leq a_k\leq C\,b_k$ for all sufficiently large $k$.
We write $a_k\sim b_k$ when $a_k/b_k\to1$.
The notation $a_k=O(b_k)$ means $|a_k|\leq Cb_k$ eventually for some finite
$C$, and $a_k=o(b_k)$ means $|a_k|/b_k\to0$; for vector-valued sequences,
replace absolute value by the Euclidean norm. These constants are independent
of $k$ but may depend on the problem instance and schedule, with further
uniformity specified in each result. The same conventions apply to functions
as their argument tends to the limit under consideration.

We follow the three stages of
Section~\ref{subsec:proof-technique}: uniform attraction and vertex-face
identification in Section~\ref{sec:attraction}, rescaling in
Section~\ref{sec:rescale}, and the weighted perceptron bound in
Section~\ref{sec:cycling}. Section~\ref{sec:rate-proofs} combines them
and bounds the effect of the nonlinear remainder to prove the rates.

\subsection{Uniform attraction and exposed-face identification}\label{sec:attraction}
This is the first stage of the proof. Hough's convergence theorem
\cite[Theorem~6 and Corollary~7]{Hough2026} implies attraction to the
solution set for each fixed trajectory. The following lemma requires
more: for each neighborhood of the solution set, one entry index must
work for every starting point and every sequence of oracle selections.
Convergence of each trajectory separately does not supply this common
index. We need it to make the constant in Theorem~\ref{thm:app-fw}
independent of the trajectory.

We obtain this uniform version by adapting Hough's interpolation and
limit-trajectory argument \cite[Lemma~5 and Theorem~6]{Hough2026}.
In a contradiction sequence, both the trajectory and the observation
time may vary. Compactness still gives a complete limit trajectory,
and Hough's exponential gap decay rules out a limit point away from the
solution set. Thus the additional argument establishes uniformity within
the same proof framework. On a polytope with a unique solution, the
positive score separation from vertices outside $F_*$ then yields a
common index for vertex-face identification.

For $p\in E$, write
\[
 \mathcal B(p):=\argmin_{s\in K}\ip ps.
\]
Fix a deterministic step sequence satisfying
$0<\gamma_k\leq1$, $\gamma_k\to0$, and $\sum_k\gamma_k=\infty$.
Let $\mathfrak X$ consist of all trajectories with $x_0\in K$ and
\[
 x_{k+1}=x_k+\gamma_k(s_k-x_k),\qquad
 s_k\in\mathcal B(\Phi(x_k)).
\]
Here selections may be arbitrary points of $K$; the vertex restriction
is needed only for identification and the rate.

\begin{lemma}[Uniform attraction and  vertex-face identification]
\label{lem:uniform-fw}
Let $K$ be nonempty, compact, and convex, and let $\Phi$ be $C^1$ on a
neighborhood of $K$ and monotone on $K$. If $\mathcal S$ is the VI solution
set, then
\[
 \sup_{(x_k)\in\mathfrak X}\operatorname{dist}(x_k,\mathcal S)\longrightarrow0.
\]
If $\mathcal S=\{x^*\}$, this is uniform convergence to $x^*$. If also $K$
is a polytope, there is a common $k_0=k_0(K,\Phi,(\gamma_k))$ such that all
vertex-returning trajectories in $\mathfrak X$ satisfy
\[
 s_k\in\operatorname{vert}(F_*)\qquad(k\geq k_0),\qquad
 F_*:=\argmin_{s\in K}\ip{\Phi(x^*)}s.
\]
\end{lemma}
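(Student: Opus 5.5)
We argue by contradiction, using compactness and an interpolation/limit-trajectory argument in the spirit of Hough's Lemma~5 and Theorem~6. Suppose the uniform statement fails. Then there are $\varepsilon>0$, trajectories $(x^{(j)}_k)\in\mathfrak X$, and times $k_j\to\infty$ with $\operatorname{dist}(x^{(j)}_{k_j},\mathcal S)\geq\varepsilon$.

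\textbf{Interpolation and limit trajectory.} For each $j$, put $\tau_k=\sum_{i<k}\gamma_i$ and let $X^{(j)}$ be the piecewise-linear interpolation of $(x^{(j)}_k)$ on the $\tau$-scale. Shift time so that $\tau_{k_j}$ becomes $0$, and call the result $Y^{(j)}(t)=X^{(j)}(t+\tau_{k_j})$. These functions are defined on $[-\tau_{k_j},\infty)$, which exhausts $\R$ because $\sum\gamma_k=\infty$. They take values in $K$ and are $\operatorname{diam}(K)$-Lipschitz. By Arzel\`a--Ascoli and a diagonal argument, a subsequence converges locally uniformly on $\R$ to some $Y$.

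The standard argument of Bena\"im--Hofbauer--Sorin shows that $Y$ is a complete solution of the inclusion
\[
\dot Y\in\mathcal B(\Phi(Y))-Y .
\]
It uses three facts: $\gamma_k\to0$ uniformly across trajectories since the schedule is fixed; $\mathcal B\circ\Phi$ is upper semicontinuous with compact convex values, because $\Phi$ is continuous and $\mathcal B$ has closed graph; and a Mazur/convexity argument passes to the limit in the averaged increments. Uniformity across trajectories causes no trouble, because every estimate depends only on $(\gamma_k)$, $\operatorname{diam}K$, and the modulus of continuity of $\Phi$ on $K$.

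\textbf{Gap decay and contradiction.} Along absolutely continuous solutions of this inclusion, monotonicity and $C^1$ give $\dot V\leq -V$ almost everywhere, with $V$ the Frank--Wolfe gap (Chen et al.; Hough). Hence $V(Y(0))\leq e^{-T}V(Y(-T))\leq e^{-T}\max_K V$ for every $T>0$, so $V(Y(0))=0$ and $Y(0)\in\mathcal S$. But $Y(0)=\lim x^{(j)}_{k_j}$ satisfies $\operatorname{dist}(Y(0),\mathcal S)\geq\varepsilon$, a contradiction.

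\textbf{The main obstacle.} The key technical point is rigorously justifying $\dot V\leq-V$ for the set-valued limit. Concretely, one must show that $t\mapsto V(Y(t))$ is absolutely continuous and compute its derivative through a Danskin-type formula. Writing $V(x)=\max_s\ip{\Phi(x)}{x-s}$, at a maximizer $s^\star\in\mathcal B(\Phi(Y))$ we get
\[
\tfrac{d}{dt}V=\ip{D\Phi(Y)\dot Y}{Y-s^\star}+\ip{\Phi(Y)}{\dot Y}.
\]
Take $\dot Y=b-Y$ with $b\in\mathcal B(\Phi(Y))$, and use $s^\star$ with $\ip{\Phi(Y)}{s^\star}=\ip{\Phi(Y)}{b}$. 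Then $\ip{\Phi(Y)}{\dot Y}=-V$, while the first term equals $-\ip{D\Phi(Y)(b-Y)}{b-Y}\le0$ by monotonicity. This derivation is the one I would take essentially from Hough, checking only that the choice of maximizer is legitimate almost everywhere.

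\textbf{Unique solution and face identification.} If $\mathcal S=\{x^*\}$, the first claim is literally uniform convergence to $x^*$.

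Now let $K$ be a polytope. Define
\[
\delta=\min\{\ip{\Phi(x^*)}{v}-\min_K\ip{\Phi(x^*)}{\cdot}:v\in\operatorname{vert}(K)\setminus F_*\}>0,
\]
which is positive since the vertex set is finite. By continuity of $\Phi$, choose $r>0$ such that $\norm{\Phi(x)-\Phi(x^*)}\operatorname{diam}(K)<\delta/2$ whenever $\norm{x-x^*}<r$.

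For such $x$, every vertex $v\notin F_*$ and every vertex $v'\in F_*$ satisfy
\[
\ip{\Phi(x)}{v-v'}>\delta-\delta=0 .
\]
Indeed, each of the two perturbation terms is below $\delta/2$. So $v$ is not a minimizer, and any vertex minimizer lies in $\operatorname{vert}(F_*)$. The uniform part gives a common $k_0$ with $\norm{x_k-x^*}<r$ for all $k\geq k_0$ and all trajectories, which yields the claim.
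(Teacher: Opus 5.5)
Your proposal is correct and follows essentially the same route as the paper. Both arguments use time-shifted interpolations of varying trajectories, a compactness limit that is a complete solution of $\dot Y\in\mathcal B(\Phi(Y))-Y$, and Hough's exponential gap decay on $[-T,0]$ to force $Y(0)\in\mathcal S$. Both then use finite vertex score separation for face identification; you compare against a vertex of $F_*$ where the paper compares against $x^*$, which is immaterial. The choice $s^\star=b$ that you flag is justified in the paper's appendix exactly as you anticipate: at almost every time the right and left Danskin derivatives of $V\circ Y$ coincide, so $\ip{D\Phi(Y)\dot Y}{r}$ is the same for every $r\in\mathcal B(\Phi(Y))$.
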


\begin{proof}
The proof has three steps: upper semicontinuity of the inclusion,
a backward-complete limit trajectory argument for uniform attraction,
and exclusion of vertices outside the exposed face.

Define the Frank--Wolfe gap and the differential-inclusion map by
\[
 V(x):=\max_{s\in K}\ip{\Phi(x)}{x-s},
 \qquad
 \mathcal G(x):=\mathcal B(\Phi(x))-x.
\]
The map $\mathcal G$ is upper semicontinuous with nonempty compact convex
values.  Indeed, if $x_j\to x$, $g_j\to g$, and
$g_j=s_j-x_j\in\mathcal G(x_j)$, compactness permits passage to
$s_j\to s\in K$, and the inequalities
$\ip{\Phi(x_j)}{s_j}\leq\ip{\Phi(x_j)}{v}$ pass to the limit for every
$v\in K$.  Thus $s\in\mathcal B(\Phi(x))$ and $g=s-x\in\mathcal G(x)$.

Hough proves that $V$ is continuous, that $V^{-1}(0)=\mathcal S$, and that
every solution $x(\cdot)$ of
\[
 \dot x(t)\in\mathcal G(x(t))
\]
satisfies
\begin{equation}\label{eq:hough-gap-decay}
 V(x(t))\leq e^{-(t-r)}V(x(r))
 \qquad(r\leq t)
\end{equation}
\cite[Lemma~2(a)--(b)]{Hough2026}. More precisely, part~(b) is stated
for $K$-valued solutions of $\dot x\in\mathcal B(\Phi(x))-x$, so no projected
extension is needed here. Applying it to the time-shifted solution
$\tau\mapsto x(r+\tau)$ gives the displayed estimate for arbitrary $r\leq t$.

A proof of the gap-decay estimate \eqref{eq:hough-gap-decay} is given in
Appendix~\ref{sec:gap-decay}.

Suppose that the asserted uniform convergence fails.  Then there are
trajectories $(x_k^j)\in\mathfrak X$, integers $k_j\to\infty$, and
$\delta>0$ such that
\[
 \operatorname{dist}(x_{k_j}^j,\mathcal S)\geq\delta.
\]
Set $\sigma_0:=0$ and $\sigma_k:=\sum_{i=0}^{k-1}\gamma_i$, and let $w_j$ be the
piecewise affine interpolation satisfying $w_j(\sigma_k)=x_k^j$.  Every
$w_j$ takes values in $K$ and is $\operatorname{diam}(K)$-Lipschitz.  Shift
the interpolation by defining
\[
 W_j(t):=w_j(t+\sigma_{k_j}),
 \qquad t\in[-\sigma_{k_j},\infty).
\]
On each compact interval, the paths $W_j$ are uniformly bounded and
equi-Lipschitz. Let $\widehat W_j(t)$ denote the left endpoint iterate and
$\ell_j(t)$ its iteration index. For almost every $t$,
\[
 \dot W_j(t)\in\mathcal G(\widehat W_j(t)).
\]
On every fixed compact interval $I$, the smallest index $\ell_j(t)$ tends
to infinity: for any fixed $m$, $\sigma_{k_j}+\inf I>\sigma_m$ eventually.
Since $\gamma_k\to0$, the interpolation error vanishes uniformly:
\[
 \sup_{t\in I}\norm{W_j(t)-\widehat W_j(t)}\to0.
\]
Theorem~\ref{thm:appendix-di-compactness}, applied on $K$ to
$f_j=W_j$, $\widehat f_j=\widehat W_j$, and $G=\mathcal G$, therefore
gives a subsequence converging locally uniformly on $\mathbb R$ to
a complete trajectory $W$ with
$\dot W\in\mathcal B(\Phi(W))-W=\mathcal G(W)$ almost everywhere.

Let $M:=\max_{x\in K}V(x)$.  Applying \eqref{eq:hough-gap-decay} on
$[-T,0]$ gives
\[
 V(W(0))\leq e^{-T}V(W(-T))\leq e^{-T}M
 \qquad(T>0).
\]
Hence $V(W(0))=0$ and $W(0)\in\mathcal S$.  On the other hand,
$W_j(0)=x_{k_j}^j$ and local uniform convergence imply
$\operatorname{dist}(W(0),\mathcal S)\geq\delta$, a contradiction.  This
proves the first assertion. If the solution is unique, distance to
$\mathcal S$ equals distance to $x^*$, giving the second.

It remains to prove identification when the solution is unique and $K$ is
a polytope. If $F_*=K$, the assertion is automatic. Otherwise put
\[
 g_*:=\min_{v\in\operatorname{vert}(K)\setminus F_*}
       \ip{\Phi(x^*)}{v-x^*}>0.
\]
For $x$ close enough to $x^*$, continuity gives
\[
 \ip{\Phi(x)}{v-x^*}\geq g_*/2
 \qquad\forall v\in\operatorname{vert}(K)\setminus F_*.
\]
Choose a common $k_0$ after which every trajectory is in that neighborhood.
A selected vertex outside $F_*$ would satisfy
\[
 0<g_*/2\leq\ip{\Phi(x_k)}{s_k-x^*}\leq0,
\]
since $x^*$ is a feasible comparator in the oracle inequality. This is
impossible. Therefore every selected vertex lies in $F_*$ after $k_0$.
\end{proof}

Lemma~\ref{lem:uniform-fw} supplies a common, but nonexplicit, entry time
into every neighborhood of the unique solution and a common index for
vertex-face identification. Hough explains that the exponential decay
\eqref{eq:hough-gap-decay} for the continuous-time inclusion does not itself
give a discrete rate on a polytope because the LMO is discontinuous
\cite[Section~3.1]{Hough2026}.  We use the uniform attraction lemma only to
obtain uniform vertex-face identification and a common neighborhood where
the Taylor remainder is small. The robust cycling theorem then controls
the discontinuous vertex choices on that face and supplies the discrete rate.

\subsection{Rescaling the Frank--Wolfe recursion}\label{sec:rescale}
The first stage has placed all subsequent selected vertices in $F_*$.
We now prepare the second stage by transforming the averaging recursion
into additive updates. This subsection constructs the rescaling and
computes its factors. Section~\ref{sec:cycling} supplies the bound for
weighted additive updates; Section~\ref{sec:rate-proofs} applies it to
the tangential error in shifted coordinates, then converts the result
back to the original iterates.

Fix $\gamma_k=\alpha/(k+\beta)$ with $\beta\geq\alpha>0$.
We work from $k=1$ onward, where $\gamma_k<1$ in every case; the first
update affects only the initial state of the argument. Subtracting
$x^*$ from the FW update gives
\[
 x_{k+1}-x^*=(1-\gamma_k)(x_k-x^*)+\gamma_k(s_k-x^*).
\]
Choose a positive sequence $\rho_k$ so that
$\rho_{k+1}(1-\gamma_k)=\rho_k$, and put
$w_k=\rho_{k+1}\gamma_k$. Multiplication by $\rho_{k+1}$ cancels the
contraction factor $1-\gamma_k$ and yields
\begin{equation}\label{eq:rescale}
 \rho_{k+1}(1-\gamma_k)=\rho_k,\qquad
 z_k:=\rho_k(x_k-x^*),\qquad z_{k+1}=z_k+w_k(s_k-x^*).
\end{equation}
The rescaled error $z_k$ therefore evolves by adding a weighted
direction, as required by the perceptron theorem.

Iterating the recurrence for $\rho_k$ gives
\[
 \rho_k=\rho_1\prod_{j=1}^{k-1}\frac{j+\beta}{j+\beta-\alpha}
 \qquad(k\geq1).
\]
The Euler Gamma function provides a convenient closed form for this
product when $\alpha$ and $\beta$ are real. It is defined by
$\Gamma(t)=\int_0^\infty r^{t-1}e^{-r}\,dr$ for $t>0$ and satisfies
$\Gamma(t+1)=t\Gamma(t)$ and $\Gamma(m+1)=m!$ for integers $m\geq0$.
Using its recurrence, we may choose the normalization
\[
 \rho_k:=\frac{\Gamma(k+\beta)}
 {\Gamma(\alpha+1)\Gamma(k+\beta-\alpha)},\qquad
 w_k:=\frac{\Gamma(k+\beta)}
 {\Gamma(\alpha)\Gamma(k+\beta+1-\alpha)}
 \qquad(k\geq1).
\]
Multiplying both sequences by a positive constant preserves
\eqref{eq:rescale}; this choice makes the common schedules especially
simple. Harmonic steps $1/(k+1)$ give $\rho_k=k$ and $w_k=1$, so the
rescaled update adds one direction at a time. Steps $2/(k+2)$ give
$\rho_k=k(k+1)/2$ and $w_k=k+1$.

The Gamma recurrence and the asymptotic identity
$\Gamma(k+a)/\Gamma(k+b)\sim k^{a-b}$ give
\[
 \rho_k\sim\frac{k^\alpha}{\Gamma(\alpha+1)},\qquad
 w_k\sim\frac{k^{\alpha-1}}{\Gamma(\alpha)},\qquad
 \frac{w_k}{\rho_k}=\frac{\alpha}{k+\beta-\alpha},\qquad
 \frac{w_k}{w_{k+1}}=\frac{k+\beta+1-\alpha}{k+\beta}.
\]
Consequently the weights increase for $\alpha>1$, are constant for
$\alpha=1$, and decrease for $0<\alpha<1$. For $\alpha=1$ and any
$\beta\geq1$, the formulas reduce to $\rho_k=k+\beta-1$ and $w_k=1$.

These growth orders explain the exponent in Theorem~\ref{thm:app-fw}.
For the exact oracle, Section~\ref{sec:rate-proofs} will prove
$\norm{z_m}=O(1+W_m)$, where $W_m=\max_{1\leq k<m}w_k$.
The formulas above give $W_m=O(1+m^{(\alpha-1)_+})$, with
$(a)_+=\max\{a,0\}$. Dividing by $\rho_m\asymp m^\alpha$ then yields
\[
 \norm{x_m-x^*}
 =O\!\left(\frac{1+m^{(\alpha-1)_+}}{m^\alpha}\right)
 =O(m^{-\min\{\alpha,1\}}).
\]

\subsection{A bound for weighted perceptron recursions}\label{sec:cycling}
The rescaling gives the recursion~\eqref{eq:rescale} with additive updates and increments $s_k-x^*$.
After vertex-face identification these increments belong to the fixed
finite set $U_*:=\operatorname{vert}(F_*)-x^*$. The following bound is
the input for the third stage: after linearization and a fixed shift,
it controls the tangential error in Section~\ref{sec:rate-proofs}.

A perceptron recursion repeatedly adds a direction from a finite set,
chosen by minimizing a linear score of the current state. The following
weighted version allows an error $B$ in this minimization and weights
between zero and $W$. It bounds every state by the initial norm, the
largest weight, and the score error. This is the weighted perceptron
cycling theorem of \cite[Corollary~1]{RobustCycling}; ``cycling'' does
not assert that a trajectory is periodic. We use this theorem as an
external input and apply it below to the states up to any fixed index
to control the nonlinear Taylor remainder. The score tolerance is needed
even for exact FW oracles, because linearizing $\Phi$ changes their
scores.

\begin{theorem}[Weighted cycling {\cite[Corollary~1]{RobustCycling}}]
\label{thm:weighted}
Let $U$ be a nonempty finite subset of a finite-dimensional Euclidean
space $E$, with $0\in\conv U$, and let $A\colon E\to E$ be linear with
$\ip{Ah}h\geq\mu\norm h^2$ for all $h\in E$ and some $\mu>0$.
There is a constant $C_w=C_w(E,U,A)\geq1$ such that every
finite or infinite recursion
\[
 z_{k+1}=z_k+w_ku_k,\qquad u_k\in U,\qquad 0\leq w_k\leq W,
 \qquad
 \ip{Az_k}{u_k}\leq\min_{u\in U}\ip{Az_k}u+B,
\]
with $W,B\geq0$, satisfies
\begin{equation}\label{eq:weighted-bound}
 \sup_k\norm{z_k}\leq C_w(W+\norm{z_0}+B).
\end{equation}
The constant is independent of $W$, $B$, the initial state, the number of updates,
and the admissible choices. 
\end{theorem}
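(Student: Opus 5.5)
The plan is a potential-function argument in the norm induced by $A$, or more precisely by its symmetric part; the key input is the finiteness of $U$ together with $0\in\conv U$. First reduce to the case where $\operatorname{lin}(U)=E$. Components of $z_k$ orthogonal to $L:=\operatorname{lin}U$ are constant, so the bound is trivial there. The scores $\ip{Az_k}{u}$ for $u\in U\subseteq L$ depend only on $P_LAz_k$, and $P_LAP_L$ is still coercive on $L$. Writing $z_k=y_k+z^\perp$ with $y_k\in L$ and $z^\perp$ fixed gives $\ip{Az_k}u=\ip{P_LAP_Ly_k}u+\ip{Az^\perp}u$. The second term is not constant in $u$, however, so the reduction has to absorb it. I would handle this by invertibility: shift $y_k$ by $(P_LAP_L)^{-1}P_LAz^\perp$, which changes $\norm{y_0}$ by at most a multiple of $\norm{z_0}$. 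This mirrors the shift $A_*^{-1}c$ used in Section~\ref{subsec:proof-technique}.

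For the core estimate on $L$, choose $\lambda_u\geq0$ with $\sum_u\lambda_u u=0$. Because $0\in\conv U$, the greedy (approximate) choice satisfies
\[
 \ip{Az_k}{u_k}\leq\sum_u\lambda_u\ip{Az_k}u+B=B.
\]
The increments are therefore nearly nonincreasing for the quadratic $Q(z)=\ip{Az}z$:
\[
 Q(z_{k+1})-Q(z_k)=w_k\bigl(\ip{Az_k}{u_k}+\ip{Au_k}{z_k}\bigr)+w_k^2Q(u_k).
\]
The cross term $\ip{Au_k}{z_k}=\ip{A^\top z_k}{u_k}$ is not controlled when $A$ is nonsymmetric, and this is the \emph{main obstacle}. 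The naive perceptron argument only shows $Q$ grows at most like $\sum w_k^2$, which is not a uniform bound.

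The real argument needs a sharper structure than this, so I would follow the classical cycling proof (Block--Levin / Gelfand et al.) adapted to the score $Az$. I would induct on $\dim L$ and show that when $\norm{z}$ is large relative to $W+B$, the near-minimizing directions for $Az$ lie in a proper face $U'$ of $\conv U$, namely the minimizers of the linear functional $\ip{A\hat z}{\cdot}$ with $\hat z=z/\norm z$. Finiteness gives a uniform score gap to the other vertices, so only $U'$ can be chosen while the state stays in a cone around $\hat z$.

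In that regime, consider first the case $0\notin\conv U'$. Then every admissible $u$ has $\ip{A\hat z}u\leq -c\norm{\cdot}$ uniformly. Coercivity gives $\ip{Az}{u}<0$, but what must actually decrease is $\norm z$, so I would use the potential $\ip{A^{-1}\cdot}{\cdot}$, or simply work in coordinates $p=Az$ with the metric $\ip{A^{-1}p}{p}$. Here $\ip{A^{-1}p}{p}\geq\mu'\norm p^2$ because $A^{-1}$ is also coercive. The score is then $\ip{p}{u}$ and the update is $p_{k+1}=p_k+w_kAu_k$. Expanding $\ip{A^{-1}p_{k+1}}{p_{k+1}}$ produces the cross term $w_k(\ip{p_k}{u_k}+\ip{A^{-\top}p_k}{Au_k})=2w_k\ip{p_k}{u_k}$, so the cross term becomes exactly twice the score. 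I expect this change of variables to resolve the obstacle: the potential $\Psi(z)=\ip{Az}{z}$ in the $z$-metric satisfies $\Psi(z_{k+1})-\Psi(z_k)=2w_k\ip{Az_k}{u_k}+\ldots$ only for symmetric $A$, whereas the $p$-metric works in general.

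Consequently, in the far regime $\Psi(p)$ strictly decreases by $\gtrsim w_k\norm p$ minus $O(w_k^2+w_kB)$. In the case $0\in\conv U'$, the face has lower dimension, and the induction hypothesis controls the motion within $\operatorname{lin}U'$. In either case the state cannot escape beyond radius $C(W+\norm{z_0}+B)$, with the constants obtained by taking a maximum over the finitely many faces. Finally, transfer back via $\norm z\leq\norm{A^{-1}}\norm p$.
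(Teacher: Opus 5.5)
Your reduction to $\operatorname{lin}U=E$ via the shift $(P_LAP_L)^{-1}P_LAz^\perp$ is sound. It is the same device the paper uses in the proof of Theorem~\ref{thm:app-fw}; the paper does not prove the present theorem itself but imports it from \cite[Corollary~1]{RobustCycling}. The step meant to handle nonsymmetric $A$, however, rests on an algebra error. With $p=Az$ one has $\ip{A^{-1}p}{p}=\ip{z}{Az}=Q(z)$, so the new metric is just the old potential. The correct expansion is
\[
 \ip{A^{-1}p_{k+1}}{p_{k+1}}-\ip{A^{-1}p_k}{p_k}
 =w_k\bigl(\ip{p_k}{u_k}+\ip{A^{-1}p_k}{Au_k}\bigr)+w_k^2\ip{Au_k}{u_k},
\]
and here $\ip{A^{-1}p_k}{Au_k}=\ip{A^\top z_k}{u_k}$ is exactly the uncontrolled term you started from. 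You wrote $A^{-\top}$ in place of $A^{-1}$, and the two agree only when $A$ is symmetric.

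No other quadratic potential helps either. For symmetric $M$, the cross term of $\ip{Mp}{p}$ is $2w_k\ip{A^\top Mp_k}{u_k}$, which equals the score only if $M=A^{-\top}$, i.e.\ only if $A$ is symmetric. Concretely, let $U=\{\pm e_1,\pm e_2\}\subset\R^2$ and $A=\mu I+J$, with $J$ the rotation by $\pi/2$ and $0<\mu<1$. From $z=(a,0)$ with $a>0$, the greedy choice is $-e_2$ until $z=(a,-t)$ with $t=a(1-\mu)/(1+\mu)$, and $\norm z$ increases along this whole leg. Since $J$ maps admissible trajectories to admissible trajectories, averaging any quadratic potential over the four rotations yields a multiple of $\norm z^2$. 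Hence no quadratic form can have the far-regime decrease of order $w_k\norm p$ that you claim. This nonsymmetric case is precisely the one the paper needs, because $A_*=P_*D\Phi(x^*)|_{T_*}$ is generally nonsymmetric.

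Even for symmetric $A$, the decisive bookkeeping is asserted rather than derived. (There, $z\mapsto A^{1/2}z$ reduces to $A=I$ and a Block--Levin-type argument is viable.) A score gap separating the minimizing face $U'$ from the rest of $U$ exists only locally in $\hat z$ and degenerates where the minimizing face changes. You therefore need a compactness covering of the sphere by cones and control of the transitions between cones.

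In the case $0\in\conv U'$, a direct use of the induction hypothesis yields only $\norm z\leq C(W+\norm{z_{\rm entry}}+B)$ with $C\geq1$. That is a multiplicative growth which can compound over repeated visits. To avoid this you must use $P_{\operatorname{lin}U'}A\hat z=0$, so that the shifted tangential state is small relative to $\norm z$ on entry, and then show that the constants do not accumulate. Your closing sentence, that the state cannot escape beyond radius $C(W+\norm{z_0}+B)$, skips exactly this content of a cycling theorem.

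For nonsymmetric $A$, some non-quadratic, polyhedral control is needed. For example, $\sigma(Az)$ with $\sigma(p)=\max_{u\in U}\ip{-p}{u}$ is nonincreasing along the continuous inclusion $\dot z\in-\partial\sigma(Az)$. Turning such control into a discrete bound that is robust to switching is what \cite{RobustCycling} supplies and your proposal does not. That bound must also cover the case where $0$ lies on the relative boundary of $\conv U$, which is the case of boundary solutions.
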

For the remainder of this subsection, fix these data and $C_w$.
For arbitrary nonnegative weights and score tolerances $b_k$, fix an
index $m\geq1$ and apply the theorem to the states $z_0,\ldots,z_m$.
The largest weight and tolerance used in these updates are
$W_m:=\max_{0\leq k<m}w_k$ and $B_m:=\max_{0\leq k<m}b_k$. Then
\begin{equation}\label{eq:horizon}
 \max_{0\leq j\leq m}\norm{z_j}
 \leq C_w(W_m+\norm{z_0}+B_m).
\end{equation}
For the nonlinear mapping, the score tolerance depends on the state being
bounded. The next lemma estimates all states with indices $N,\ldots,m$
when this dependence has a sufficiently small coefficient.

\begin{lemma}\label{lem:absorb}
Fix $m>N$. Suppose $y_{k+1}=y_k+w_ku_k$, $u_k\in U$, and $w_k\geq0$
for $N\leq k<m$, and the score tolerance at each such step is at most
$b+\eta\norm{y_k}$, where $b,\eta\geq0$ and $C_w\eta<1$. Put
$W_m:=\max_{N\leq k<m}w_k$. Then
\begin{equation}\label{eq:finite-absorb}
 \max_{N\leq k\leq m}\norm{y_k}
 \leq\frac{C_w\bigl(W_m+\norm{y_N}+b\bigr)}
 {1-C_w\eta}.
\end{equation}
\end{lemma}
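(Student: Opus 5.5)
The plan is to apply the finite-horizon bound \eqref{eq:horizon} to truncated runs of the recursion, and to absorb the state-dependent part of the tolerance into the left-hand side. The setting is that of Theorem~\ref{thm:weighted}: the same $U$ and $A$, hence the same $C_w$. Shift indices so that the recursion starts at $N$; the constant $C_w$ is independent of the starting index, the initial state, and the number of updates.

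Define the running maximum $M_j:=\max_{N\leq k\leq j}\norm{y_k}$ for $N\leq j\leq m$. For a fixed $j\in\{N+1,\ldots,m\}$, the truncated recursion $y_N,\ldots,y_j$ uses weights at most $W_m$. At each step $k<j$ its score tolerance is at most $b+\eta\norm{y_k}\leq b+\eta M_j$. The quantity $B:=b+\eta M_j$ is a constant for this truncated run, so Theorem~\ref{thm:weighted} applies verbatim with $W=W_m$ and this $B$. It gives
\[
 M_j\leq C_w\bigl(W_m+\norm{y_N}+b+\eta M_j\bigr).
\]
For $j=m$ this reads $M_m\leq C_w(W_m+\norm{y_N}+b)+C_w\eta M_m$. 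Because $M_m$ is a finite maximum of finitely many norms, we may subtract $C_w\eta M_m$ from both sides. Dividing by $1-C_w\eta>0$ then yields \eqref{eq:finite-absorb}.

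The only delicate point is the apparent circularity: the tolerance bound $B$ depends on the very states being bounded. This is harmless because Theorem~\ref{thm:weighted} only requires some number $B$ dominating every tolerance along the run, fixed in advance, and $b+\eta M_m$ is such a number. Finiteness of the horizon guarantees $M_m<\infty$, so moving the term across the inequality is legitimate. This is exactly why the lemma is stated for a fixed $m$. An induction on $j$ is therefore unnecessary, since applying the estimate once at $j=m$ suffices. Nonnegativity of the weights and membership $u_k\in U$ are inherited directly from the hypotheses.
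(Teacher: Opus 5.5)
Your proposal is correct and follows the paper's proof: you bound every tolerance on the finite run by $b+\eta M_m$ with $M_m:=\max_{N\leq k\leq m}\norm{y_k}$, apply Theorem~\ref{thm:weighted} once with $B=b+\eta M_m$, and absorb the term $C_w\eta M_m$ using $C_w\eta<1$. The intermediate running maxima $M_j$ for $j<m$ are unnecessary, as you note yourself.
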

\begin{proof}
Put $M_m:=\max_{N\leq k\leq m}\norm{y_k}$. Every score tolerance
for $N\leq k<m$ is at most $b+\eta M_m$. Theorem~\ref{thm:weighted} gives
\[
 M_m\leq C_w(W_m+\norm{y_N}+b)+C_w\eta M_m.
\]
Subtracting $C_w\eta M_m$ from both sides and dividing by the positive
number $1-C_w\eta$ proves the bound. If $W_m=0$, the sequence is
constant and the bound is immediate.
\end{proof}

\subsection{Proof of the last-iterate bound}\label{sec:rate-proofs}
We now combine the preceding steps. Uniform attraction identifies the
selected face vertices and makes the Taylor remainder small. Rescaling
and a tangential shift put the recursion in the form required by
Theorem~\ref{thm:weighted}. Lemma~\ref{lem:absorb} then bounds the shifted
states while accounting for the Taylor remainder.
Figure~\ref{fig:face-identification} illustrates the decomposition used below:
selected vertices enter the exposed face, while the iterates may remain outside it.
\input{face_identification.tex}
\begin{proof}[Proof of Theorem~\ref{thm:app-fw}]
The prescribed schedule satisfies the hypotheses of
Lemma~\ref{lem:uniform-fw}, and the solution is unique by
Lemma~\ref{lem:unique}. There is therefore a common identification index
$N_0\geq1$ such that all subsequent selected vertices lie in $F_*$.
We use the rescaling factors defined for $k\geq1$ in
Section~\ref{sec:rescale}.

Set $U_*:=\operatorname{vert}(F_*)-x^*$, so $0\in\conv U_*$, and use
$z_k=\rho_k(x_k-x^*)$. If $U_*=\{0\}$, \eqref{eq:rescale} makes $z_k$
constant after $N_0$, and $\norm{x_k-x^*}=O(k^{-\alpha})$, which suffices.
Otherwise put $R_*:=\max_{u\in U_*}\norm u>0$. Let $P_*$ be the orthogonal
projection onto $T_*$, and write
\[
 y_k=P_*z_k,\qquad q=z_k-y_k,\qquad
 A_*=P_*D\Phi(x^*)|_{T_*},\qquad c=P_*D\Phi(x^*)q.
\]
Every increment $w_k(s_k-x^*)$ belongs to $T_*$, so $q$ is constant after
$N_0$. By \eqref{eq:face-coercivity},
$\ip{A_*h}h\geq\mu_*\norm h^2$ for $h\in T_*$. Thus $A_*$ is invertible
and $\norm{A_*^{-1}}\leq\mu_*^{-1}$. Define
\[
 d=A_*^{-1}c,\qquad \widetilde y_k=y_k+d,\qquad u_k=s_k-x^*.
\]
Then $\widetilde y_{k+1}=\widetilde y_k+w_ku_k$. The Taylor remainder
\[
 r_k:=\rho_kP_*\bigl(\Phi(x_k)-\Phi(x^*)
              -D\Phi(x^*)(x_k-x^*)\bigr)
\]
obeys
\begin{equation}\label{eq:boundary-remainder}
 \norm{r_k}\leq\omega(\norm{x_k-x^*})\norm{z_k},\qquad \omega(t)\to0.
\end{equation}
A common nondecreasing modulus $\omega$ follows by taking the supremum of
the normalized Taylor remainder on $K\cap B(x^*,t)$.

For $u\in U_*$, $\ip{\Phi(x^*)}u=0$. Multiplying the oracle inequality
by $\rho_k>0$ therefore gives
\[
 \ip{A_*\widetilde y_k+r_k}{u_k}
 =\min_{u\in U_*}\ip{A_*\widetilde y_k+r_k}u.
\]
Comparing with a minimizer of the linearized score yields
\begin{equation}\label{eq:boundary-oracle-error}
 \ip{A_*\widetilde y_k}{u_k}
 \leq\min_{u\in U_*}\ip{A_*\widetilde y_k}u
       +2R_*\norm{r_k}.
\end{equation}
The substitution $\widetilde y_k=y_k+A_*^{-1}c$ gives
$A_*\widetilde y_k=A_*y_k+c$. The oracle tolerance in
\eqref{eq:boundary-oracle-error} therefore depends only on the Taylor
remainder $r_k$.

Let $C_{w,*}$ be the constant in Theorem~\ref{thm:weighted} for
$(T_*,U_*,A_*)$. Choose $\epsilon>0$ with $2C_{w,*}R_*\epsilon<1/2$.
Uniform attraction supplies a common $N\geq N_0$ after which the Taylor
modulus is at most $\epsilon$. Put
\[
 D_K:=\operatorname{diam}(K),\quad L_*:=\norm{D\Phi(x^*)},\quad
 Q_N:=\rho_ND_K(1+L_*/\mu_*).
\]
Uniformly over the family,
\[
 \norm{y_N},\norm q\leq\rho_ND_K,\quad
 \norm c\leq L_*\rho_ND_K,\quad
 \norm d\leq(L_*/\mu_*)\rho_ND_K.
\]
Hence $\norm{\widetilde y_N}\leq Q_N$ and
$\norm{z_k}\leq\norm{\widetilde y_k}+Q_N$ for $k\geq N$.
Fix any index $m>N$ and set
\[
 Y_m:=\max_{N\leq k\leq m}\norm{\widetilde y_k},\qquad
 W_m:=\max_{N\leq k<m}w_k.
\]
The tolerance is at most $2R_*\epsilon(Y_m+Q_N)$. Applying
Lemma~\ref{lem:absorb} with $b=2R_*\epsilon Q_N$ and
$\eta=2R_*\epsilon$ gives
\[
 Y_m\leq\frac{C_{w,*}(W_m+Q_N+2R_*\epsilon Q_N)}
                   {1-2C_{w,*}R_*\epsilon}.
\]
The coefficient in this bound is independent of $m$.
The schedule formulas in Section~\ref{sec:rescale} imply
\[
 W_m=O(1+m^{(\alpha-1)_+}),\qquad \rho_m\asymp m^\alpha,
\]
where $a_+:=\max\{a,0\}$. Adding $Q_N$ and dividing by $\rho_m$ gives
$\norm{x_m-x^*}=O(m^{-\min\{\alpha,1\}})$.
The diameter bound covers the finite prefix after enlarging the constant.
The constants may depend on the instance and schedule but are uniform
over starting points and vertex selections.
\end{proof}

\section{Step Sizes and Further Consequences}\label{sec:consequences}
The proof in Section~\ref{sec:rate-proofs} bounds the rescaled iterates
in terms of the largest weight used so far. This estimate also applies
to general step-size schedules. We first state the resulting bound,
then derive the gap bounds, and finally prove the lower bounds of
Theorem~\ref{thm:schedule-characterization} and their oracle-count
interpretation.
\subsection{General step-size schedules}
\begin{corollary}\label{cor:general-schedules}
Under the assumptions on $K$ and $\Phi$ of Theorem~\ref{thm:app-fw}, use any
fixed deterministic schedule with $0<\gamma_k\leq1$, $\gamma_k\to0$, and
$\sum_k\gamma_k=\infty$ in the exact vertex iteration. Choose $n\geq1$ so that
$\gamma_k<1$ for $k\geq n$, and define
\[
 \rho_n=1,\qquad \rho_{k+1}=\frac{\rho_k}{1-\gamma_k},\qquad
 w_k=\rho_{k+1}\gamma_k,\qquad W_m=\max_{n\leq k<m}w_k.
\]
There is a finite constant $C$, depending only on the problem, the fixed
schedule, and $n$, such that
\begin{equation}\label{eq:general-schedule-rate}
 \norm{x_m-x^*}\leq C\frac{W_m}{\rho_m}\qquad(m>n),
\end{equation}
uniformly over starting points and vertex tie-breaking. If the weights are
eventually nondecreasing, equivalently if eventually
\begin{equation}\label{eq:schedule-weight-monotonicity}
 \gamma_{k+1}\geq\frac{\gamma_k}{1+\gamma_k},
\end{equation}
then $\norm{x_m-x^*}=O(\gamma_m)$. In particular, this holds for
$\gamma_k=c/(k+b)^a$ with $0<a<1$, $c,b>0$, and $b^a\geq c$.
\end{corollary}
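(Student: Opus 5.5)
The plan is to rerun the proof of Theorem~\ref{thm:app-fw} verbatim, with the Gamma-function schedule replaced by the general normalization $\rho_n=1$, $\rho_{k+1}=\rho_k/(1-\gamma_k)$, $w_k=\rho_{k+1}\gamma_k$. That proof never used the specific form of $\gamma_k$ except at the very last line. Lemma~\ref{lem:uniform-fw} is stated for any deterministic schedule with $0<\gamma_k\le1$, $\gamma_k\to0$, $\sum\gamma_k=\infty$. It therefore gives, uniformly over starting points and tie-breaking, an identification index $N_0$ and an index $N\ge\max\{N_0,n\}$ after which the Taylor modulus $\omega(\norm{x_k-x^*})$ is at most $\epsilon$, with $2C_{w,*}R_*\epsilon<1/2$. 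The rescaling \eqref{eq:rescale} holds for $k\ge n$, so the decomposition $z_k=y_k+q$ applies unchanged, as do the shift $\widetilde y_k=y_k+A_*^{-1}c$ and the tolerance bound \eqref{eq:boundary-oracle-error}.

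Applying Lemma~\ref{lem:absorb} from index $N$ gives
\[
\max_{N\le k\le m}\norm{\widetilde y_k}\le C'\bigl(\max_{N\le k<m}w_k+Q_N\bigr),
\]
with $Q_N=\rho_N D_K(1+L_*/\mu_*)$ independent of the trajectory. Hence $\norm{z_m}\le C''(W_m+1)$, and dividing by $\rho_m$ yields $\norm{x_m-x^*}\le C''(W_m+1)/\rho_m$ for $m\ge N$. To absorb the additive $1$ into $W_m$, note that $W_m\ge w_n=\rho_{n+1}\gamma_n>0$ for $m>n$, so $1\le W_m/w_n$. For the finite range $n<m<N$, use $\norm{x_m-x^*}\le D_K$ together with $W_m/\rho_m\ge w_n/\rho_N>0$. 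Both cases give \eqref{eq:general-schedule-rate} after enlarging $C$. The constant depends only on the problem, the schedule and $n$.

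For the monotone case, first check the equivalence. We have $w_{k+1}/w_k=\rho_{k+2}\gamma_{k+1}/(\rho_{k+1}\gamma_k)=\gamma_{k+1}/((1-\gamma_{k+1})\gamma_k)$. This is at least $1$ iff $\gamma_{k+1}\ge\gamma_k(1-\gamma_{k+1})$, that is, iff $\gamma_{k+1}(1+\gamma_k)\ge\gamma_k$, which is \eqref{eq:schedule-weight-monotonicity}. If the weights are nondecreasing from some $k_1$ on, then $W_m\le\max\{W_{k_1},w_{m-1}\}$. Moreover $w_{m-1}/\rho_m=\gamma_{m-1}\rho_m/\rho_m$ equals $\gamma_{m-1}$ exactly, because $w_{m-1}=\rho_m\gamma_{m-1}$. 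The term $W_{k_1}/\rho_m$ needs a separate argument, since it decays like $\prod(1-\gamma_j)\le e^{-\sum_{j<m}\gamma_j}$. Monotone weights give $\rho_m\gamma_{m-1}\ge w_{k_1}$, so $W_{k_1}/\rho_m\le (W_{k_1}/w_{k_1})\gamma_{m-1}$.

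The remaining task, and the main obstacle, is passing from $\gamma_{m-1}$ to $\gamma_m$. Condition \eqref{eq:schedule-weight-monotonicity} gives $\gamma_m\ge\gamma_{m-1}/(1+\gamma_{m-1})\ge\gamma_{m-1}/2$, so $\gamma_{m-1}\le2\gamma_m$ and hence $\norm{x_m-x^*}=O(\gamma_m)$.

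For the example $\gamma_k=c/(k+b)^a$, the assumption $b^a\ge c$ gives $\gamma_k\le1$. Vanishing and nonsummability are clear since $a<1$. To verify \eqref{eq:schedule-weight-monotonicity} eventually, write it as $1/\gamma_{k+1}\le 1/\gamma_k+1$, i.e.
\[
\bigl((k+1+b)^a-(k+b)^a\bigr)/c\le1.
\]
By the mean value theorem, the left side is at most $a(k+b)^{a-1}/c\to0$, so the condition holds for large $k$.
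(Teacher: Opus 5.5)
Your proposal is correct and follows the paper's proof essentially step by step. You rerun the decomposition, tangential shift and Lemma~\ref{lem:absorb} from a common index $N\geq n$ to get $\norm{z_m}=O(1+W_m)$, absorb the constant via $W_m\geq w_n>0$, handle the finite prefix with the diameter bound, and then use $w_{k+1}/w_k=\gamma_{k+1}/(\gamma_k(1-\gamma_{k+1}))$ and $w_{m-1}/\rho_m=\gamma_{m-1}$, with $\gamma_{m-1}\leq2\gamma_m$ playing the role of the paper's $\gamma_{m-1}\leq\gamma_m/(1-\gamma_m)$. The only detail left implicit under ``verbatim'' is the degenerate case $U_*=\{0\}$, where $z_m$ is constant after identification.
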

\begin{proof}
The rescaling gives $z_{k+1}=z_k+w_k(s_k-x^*)$ exactly.
Lemma~\ref{lem:uniform-fw} supplies uniform identification and a common index
$N\geq n$ where the Taylor modulus is small. Applying the same
decomposition, change of coordinates, and Lemma~\ref{lem:absorb} as in the
proof of Theorem~\ref{thm:app-fw} gives $\norm{z_m}=O(1+W_m)$;
the singleton-face case gives a constant $z_m$.
Since $W_m\geq w_n>0$, we have $1+W_m\leq(1+w_n^{-1})W_m$; dividing by
$\rho_m$ and enlarging $C$ for the finite prefix proves
\eqref{eq:general-schedule-rate}. Moreover,
\[
 \frac{w_{k+1}}{w_k}
 =\frac{\gamma_{k+1}}{\gamma_k(1-\gamma_{k+1})}.
\]
Thus \eqref{eq:schedule-weight-monotonicity} is precisely eventual
nondecrease of the weights, in which case $W_m=O(w_{m-1})$ and
\[
 \frac{W_m}{\rho_m}=O(\gamma_{m-1})=O(\gamma_m),\qquad
 \gamma_{m-1}\leq\frac{\gamma_m}{1-\gamma_m}
\]
for all sufficiently large $m$. For the stated power schedule,
$\gamma_{k+1}^{-1}-\gamma_k^{-1}
=c^{-1}((k+b+1)^a-(k+b)^a)\to0$, so the condition holds eventually.
\end{proof}

The ratio in \eqref{eq:general-schedule-rate} tends to zero for every
schedule in the corollary: $\rho_m\to\infty$, each fixed $w_k/\rho_m\to0$,
and $w_k/\rho_m\leq\gamma_k$ for $n\leq k<m$. Splitting the maximum into
a finite prefix and a tail proves the claim.

\subsection{Affine mappings and harmonic steps}
\begin{corollary}\label{cor:affine-harmonic}
Suppose $\alpha=\beta=1$ and $\Phi(x)=Lx+a$ is monotone on $K$ and
satisfies \eqref{eq:face-coercivity}. Then the exact-oracle conclusion of
Theorem~\ref{thm:app-fw}, including boundary solutions, follows using only
the exact unit-step cycling bound of \cite[Theorem~1, $B=0$]{RobustCycling}.
\end{corollary}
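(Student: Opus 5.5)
The plan is to rerun the proof of Theorem~\ref{thm:app-fw} in the affine setting and check that every score tolerance vanishes identically. Then the only cycling input needed is the exact ($B=0$) unit-weight bound of \cite[Theorem~1]{RobustCycling}, in place of Theorem~\ref{thm:weighted} and Lemma~\ref{lem:absorb}.

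First I would fix the common identification index. Lemma~\ref{lem:unique} gives uniqueness of $x^*$. Lemma~\ref{lem:uniform-fw} applies, since an affine monotone map is $C^1$ on all of $E$, and it gives a common $N_0$ after which every selected vertex lies in $F_*$. For $\alpha=\beta=1$, Section~\ref{sec:rescale} gives $\rho_k=k$ and $w_k=1$. Hence $z_k=k(x_k-x^*)$ satisfies $z_{k+1}=z_k+u_k$ with $u_k\in U_*=\operatorname{vert}(F_*)-x^*$ and unit weights, for $k\geq N_0$. If $U_*=\{0\}$, then $z_k$ is constant and we are done.

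Otherwise I would decompose $z_k=y_k+q$ with $y_k=P_*z_k$ and $q$ constant, and set $A_*=P_*L|_{T_*}$, $c=P_*Lq$, and $\widetilde y_k=y_k+A_*^{-1}c$. The key observation is that $\Phi(x_k)-\Phi(x^*)=L(x_k-x^*)$ exactly, so the remainder $r_k$ of \eqref{eq:boundary-remainder} is identically zero. For $u\in U_*$ we have $\ip{\Phi(x^*)}u=0$, and $u\in T_*$, so $\ip{Lz_k}u=\ip{P_*Lz_k}u$. Multiplying the oracle inequality by $\rho_k$ then gives $\ip{A_*\widetilde y_k}{u_k}=\min_{u\in U_*}\ip{A_*\widetilde y_k}u$ exactly. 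This is an exact unit-weight perceptron recursion on $T_*$ with $0\in\conv U_*$ and $A_*$ coercive by \eqref{eq:face-coercivity}. Therefore \cite[Theorem~1, $B=0$]{RobustCycling} gives $\sup_{k\geq N_0}\norm{\widetilde y_k}\leq C(1+\norm{\widetilde y_{N_0}})$.

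Next I would bound the starting state uniformly, exactly as in the main proof: $\norm{\widetilde y_{N_0}}\leq N_0D_K(1+\norm L/\mu_*)$ and $\norm q\leq N_0D_K$. Together these give $\norm{z_m}=O(1)$ uniformly, hence $\norm{x_m-x^*}\leq C/m$. The diameter bound covers $m\leq N_0$.

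The main point to verify is that \cite[Theorem~1]{RobustCycling} applies with unit weights in exactly this shifted form. That is, its hypotheses should be the same data $(T_*,U_*,A_*)$ with exact minimization and no requirement that $x^*$ be in the relative interior. Here the boundary case enters only through the constant shift $d$, which preserves the additive update. I would also note that no neighborhood-shrinking argument is needed: once identification holds, the reduction is exact for every $k\geq N_0$. Uniform attraction is used only to obtain $N_0$.
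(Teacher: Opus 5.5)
Your proposal is correct and follows the paper's proof essentially step for step. Like the paper, you use:
\begin{itemize}
\item uniform identification from Lemma~\ref{lem:uniform-fw}, which is independent of cycling;
\item $\rho_k=k$ and $w_k=1$;
\item the same normal/tangential decomposition and shift $\widetilde y_k=y_k+A_*^{-1}c$;
\item $r_k\equiv0$, which yields an exact unit-step recursion bounded by $C_*(1+\norm{\widetilde y_{N_0}})$;
\item uniform bounds on the initial shifted state, $d$, and $q$ in terms of $N_0D_K$ and $\norm L/\mu_*$, plus the trivial case $U_*=\{0\}$.
\end{itemize}
The only difference is that you write out the score identity $\ip{Lz_k}{u}=\ip{A_*\widetilde y_k}{u}$ for $u\in U_*$, which the paper leaves implicit by referring back to the main proof.
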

\begin{proof}
Uniform vertex-face identification is independent of the cycling theorem.
After its common index $k_0\geq1$, use the decomposition and shift from the
proof of Theorem~\ref{thm:app-fw}. Here $\rho_k=k$, $w_k=1$, and $r_k=0$ identically, so
\[
 \widetilde y_{k+1}=\widetilde y_k+u_k,\qquad
 u_k\in\argmin_{u\in U_*}\ip{A_*\widetilde y_k}u.
\]
Let $C_*$ be the unit-step cycling constant. Exact cycling bounds
$\widetilde y_k$ by $C_*(1+\norm{\widetilde y_{k_0}})$.
The initial shifted state, the shift $d$, and the normal component $q$ are
uniformly bounded in terms of $k_0D_K$ and $\norm L/\mu_*$. Hence $z_k$ is
uniformly bounded. The case $U_*=\{0\}$ is immediate because $z_k$ is
constant after $k_0$.
\end{proof}

For $\Phi(x)=x-\mu$ with $\mu\in K$, the preceding corollary reduces
to finite herding and its perceptron-cycling proof
\cite{GelfandEtAl2010,BachEtAl2012}. A positive-definite symmetric
quadratic discrepancy reduces to this case by a linear change of
coordinates. The reduction above also handles nonsymmetric affine mappings
and boundary solutions. The substitution $\widetilde y_k=y_k+A_*^{-1}c$
gives $A_*\widetilde y_k=A_*y_k+c$, so the oracle scores take the form
required by the exact cycling theorem.

\subsection{Frank--Wolfe and dual VI gaps}\label{sec:gap-rates}
The two gaps in \eqref{eq:vi-gaps} have different local behavior. The
following consequence of face coercivity controls the dual gap without
assuming global strong monotonicity.

\begin{lemma}[Quadratic growth and the dual gap]\label{lem:dual-growth}
Under the hypotheses of Theorem~\ref{thm:app-fw}, there is $\lambda>0$
such that, for every $s\in K$,
\begin{equation}\label{eq:radial-growth}
 \ip{\Phi(s)}{s-x^*}\geq\lambda\norm{s-x^*}^2.
\end{equation}
Consequently, for any Lipschitz constant $L$ of $\Phi$ on $K$,
\begin{equation}\label{eq:dual-gap-bound}
 0\leq G(x)\leq\ip{\Phi(x^*)}{x-x^*}
                 +\frac{L^2}{4\lambda}\norm{x-x^*}^2\qquad\forall x\in K.
\end{equation}
\end{lemma}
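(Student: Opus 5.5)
The plan is to prove \eqref{eq:radial-growth} by a local argument near $x^*$ combined with a compactness argument away from $x^*$, and then to derive \eqref{eq:dual-gap-bound} by completing the square. Write $h=s-x^*$ and split
\[
 \ip{\Phi(s)}{s-x^*}=\ip{\Phi(x^*)}{h}+\ip{\Phi(s)-\Phi(x^*)}{h}.
\]
Both terms are nonnegative: the first by the VI inequality \eqref{eq:vi}, the second by monotonicity.

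\textbf{Step 1 (linear growth off the face).} Since $K$ is a polytope and the linear function $\ell(s)=\ip{\Phi(x^*)}{s-x^*}$ is nonnegative on $K$ and vanishes exactly on $F_*$, there is $\kappa>0$ with
\[
 \ell(s)\geq\kappa\operatorname{dist}(s,F_*)\qquad\forall s\in K.
\]
I would get this from a Hoffman-type bound, or directly from the vertex representation. Write $s=\sum_v\theta_v v$ over the vertices of $K$. The point $s$ lies within $D_K\sum_{v\notin F_*}\theta_v$ of $F_*$, while $\ell(s)\geq g_*\sum_{v\notin F_*}\theta_v$ with $g_*$ as in the proof of Lemma~\ref{lem:uniform-fw}. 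So one may take $\kappa=g_*/D_K$.

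\textbf{Step 2 (local estimate).} Let $p$ be the nearest point of $F_*$ to $s$, and write $h=t+n$ with $t=p-x^*\in T_*$ and $n=s-p$, $\norm n=d:=\operatorname{dist}(s,F_*)$. Since $\Phi$ is $C^1$, we have $\ip{\Phi(s)-\Phi(x^*)}h=\ip{D\Phi(x^*)h}h+o(\norm h^2)$. Expanding and using \eqref{eq:face-coercivity} on $t$ gives
\[
 \ip{D\Phi(x^*)h}h\geq\mu_*\norm t^2-2L_*\norm t d-L_*d^2\geq\tfrac{\mu_*}{2}\norm t^2-C d^2 .
\]
Adding the first term from Step 1, the total is at least $\kappa d-Cd^2+\frac{\mu_*}{2}\norm t^2-o(\norm h^2)$. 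For $\norm h$ small, $d\leq\norm h$ is small, so $Cd^2\leq\kappa d/2$, and $\kappa d/2\geq(\kappa/2)d^2$. This yields a lower bound $c(d^2+\norm t^2)-o(\norm h^2)\geq (c/2)\norm h^2$ on some ball $B(x^*,r)$.

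\textbf{Step 3 (away from $x^*$).} The function $s\mapsto\ip{\Phi(s)}{s-x^*}/\norm{s-x^*}^2$ is continuous on the compact set $K\setminus B(x^*,r)$, so it suffices to show it is positive there. If $\ip{\Phi(s)}{s-x^*}=0$, then both terms in the split vanish. Hence $s\in F_*$ and $h\in T_*$, and $\ip{\Phi(x^*)}h=0$. Now repeat the segment argument from Lemma~\ref{lem:unique}: the map $\tau\mapsto\ip{\Phi(x^*+\tau h)}h$ is nondecreasing with zero endpoints, so it is identically zero. Its derivative at $0$ then contradicts \eqref{eq:face-coercivity} unless $h=0$. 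Taking $\lambda$ as the minimum of the two constants proves \eqref{eq:radial-growth}.

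\textbf{Step 4 (dual gap).} The lower bound $G(x)\geq0$ follows by taking $s=x$. For the upper bound, fix $s\in K$ and write
\[
 \ip{\Phi(s)}{x-s}=\ip{\Phi(s)}{x-x^*}-\ip{\Phi(s)}{s-x^*}.
\]
Bound the first term by $\ip{\Phi(x^*)}{x-x^*}+L\norm{s-x^*}\norm{x-x^*}$, and the second from below by \eqref{eq:radial-growth}. Maximizing the quadratic $La\norm{x-x^*}-\lambda a^2$ over $a=\norm{s-x^*}\geq0$ gives the term $\frac{L^2}{4\lambda}\norm{x-x^*}^2$. Taking the maximum over $s\in K$ then yields \eqref{eq:dual-gap-bound}.

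I expect Step 2 to be the main obstacle. It requires balancing the linear growth normal to $F_*$ against the cross terms and the Taylor remainder, which is also where the polytope structure of Step 1 is essential.
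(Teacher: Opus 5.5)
Your proof is correct. Steps 3 and 4 coincide with the paper's argument: strict positivity of $\ip{\Phi(s)}{s-x^*}$ for $s\neq x^*$ via the segment argument of Lemma~\ref{lem:unique}, and the dual-gap bound by completing the square in $\norm{s-x^*}$.

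The difference lies in the local estimate near $x^*$. The paper never splits $h$. It argues by contradiction, taking $s_j$ with $\ip{\Phi(s_j)}{s_j-x^*}/\norm{s_j-x^*}^2\to0$; strict positivity and compactness force $s_j\to x^*$. It then passes to a limit $v$ of the normalized directions $v_j=(s_j-x^*)/\norm{s_j-x^*}$. Since $0\leq\ip{\Phi(x^*)}{v_j}\leq\ip{\Phi(s_j)}{s_j-x^*}/\norm{s_j-x^*}\to0$ and the polyhedral cone $\operatorname{cone}(K-x^*)$ is closed, $v$ is a feasible direction annihilated by $\Phi(x^*)$. Hence $v\in T_*$, and the Taylor expansion bounds the ratio below by $\mu_*+o(1)$. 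Coercivity is applied to the limit direction itself, so no error bound or cross-term estimate is needed.

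You replace this blow-up with two ingredients:
\begin{itemize}
\item the linear error bound $\ip{\Phi(x^*)}{s-x^*}\geq(g_*/D_K)\operatorname{dist}(s,F_*)$;
\item Young's inequality for the cross terms between $t\in T_*$ and $n$.
\end{itemize}
Your route is longer but constructive on a ball around $x^*$. It makes the linear growth normal to $F_*$ visible, and it gives a local constant and radius in terms of $g_*/D_K$, $\mu_*$, $\norm{D\Phi(x^*)}$ and the Taylor modulus. This confines the nonconstructive part of $\lambda$ to your compactness Step~3, a separation the paper's single contradiction argument does not provide.

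A minor point on constants: using only $\norm h^2\leq2(\norm t^2+d^2)$, absorbing the $o(\norm h^2)$ remainder leaves $(c/4)\norm h^2$ rather than $(c/2)\norm h^2$. Any positive constant suffices, so this does not affect the proof.
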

\begin{proof}
The singleton case is immediate. Write $a=\Phi(x^*)$ and
$H(s)=\ip{\Phi(s)}{s-x^*}$. Monotonicity and the VI give
$H(s)\geq\ip a{s-x^*}\geq0$. In fact $H(s)>0$ for $s\ne x^*$:
if equality held, $h=s-x^*$ would belong to $T_*$, while the
nondecreasing function $t\mapsto\ip{\Phi(x^*+th)}h$ would have zero
endpoint values and strictly positive derivative at zero, a contradiction.

If \eqref{eq:radial-growth} failed, compactness and this strict positivity
would give $s_j\to x^*$ with $s_j\ne x^*$ and
$H(s_j)/\norm{s_j-x^*}^2\to0$. Pass to a subsequence so that
$v_j=(s_j-x^*)/\norm{s_j-x^*}\to v$, with $\norm v=1$.
The polyhedral cone $\operatorname{cone}(K-x^*)$ is closed, so $v$ lies
in this cone. Also $\ip a v=0$, since
$0\leq\ip a{v_j}\leq H(s_j)/\norm{s_j-x^*}\to0$.
Thus $x^*+tv\in F_*$ for some $t>0$, and $v\in T_*$.
Differentiability at $x^*$ now gives
\[
 \frac{H(s_j)}{\norm{s_j-x^*}^2}
 =\frac{\ip a{s_j-x^*}}{\norm{s_j-x^*}^2}
    +\ip{D\Phi(x^*)v_j}{v_j}+o(1)
 \geq\mu_*+o(1),
\]
a contradiction. This proves \eqref{eq:radial-growth}.

For $x,s\in K$, put $e=x-x^*$ and $h=s-x^*$. Then
\[
 \ip{\Phi(s)}{x-s}
 =\ip a e+\ip{\Phi(s)-a}e-H(s)
 \leq\ip a e+L\norm h\norm e-\lambda\norm h^2
 \leq\ip a e+\frac{L^2}{4\lambda}\norm e^2.
\]
Maximizing over $s$ proves \eqref{eq:dual-gap-bound}.
\end{proof}

\begin{corollary}[Frank--Wolfe gap, dual gap, and squared distance]
\label{cor:gap-distance}
For exact-oracle trajectories under Theorem~\ref{thm:app-fw}, put
$r=\min\{\alpha,1\}$ and $s=\min\{\alpha,2\}$. Uniformly over starts
and vertex selections for the fixed instance and schedule,
\[
 V(x_k)=O(k^{-r}),\qquad G(x_k)=O(k^{-s}),\qquad
 \norm{x_k-x^*}^2=O(k^{-2r}).
\]
\end{corollary}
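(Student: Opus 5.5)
The plan is to derive the three rates from Theorem~\ref{thm:app-fw} and Lemma~\ref{lem:dual-growth}, together with one exact identity for the linear term $\ip{\Phi(x^*)}{x_k-x^*}$ after face identification.

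\textbf{Squared distance and the Frank--Wolfe gap.} The squared-distance bound follows by squaring the estimate of Theorem~\ref{thm:app-fw}, since $\norm{x_k-x^*}\leq C_{\rm FW}k^{-r}$. For $V$, I fix any $s\in K$ and use the VI at $x^*$ to drop a nonpositive term:
\[
 \ip{\Phi(x)}{x-s}=\ip{\Phi(x)-\Phi(x^*)}{x-s}+\ip{\Phi(x^*)}{x-x^*}+\ip{\Phi(x^*)}{x^*-s}.
\]
The last term is $\leq0$. The first term is at most $L\norm{x-x^*}D_K$, where $L$ is a Lipschitz constant of $\Phi$ on $K$; it exists because $\Phi$ is $C^1$ near the compact set $K$. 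The middle term is at most $\norm{\Phi(x^*)}\norm{x-x^*}$. Hence $V(x)\leq(LD_K+\norm{\Phi(x^*)})\norm{x-x^*}$, and therefore $V(x_k)=O(k^{-r})$ uniformly over starts and vertex selections.

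\textbf{The dual gap.} By \eqref{eq:dual-gap-bound}, $G(x_k)\leq\ip{\Phi(x^*)}{x_k-x^*}+\tfrac{L^2}{4\lambda}\norm{x_k-x^*}^2$. The quadratic term is $O(k^{-2r})$, and $2r\geq\min\{\alpha,2\}=s$. It therefore remains to bound the linear term $\ell_k:=\ip{\Phi(x^*)}{x_k-x^*}\geq0$ by $O(k^{-\alpha})$, since $\alpha\geq s$.

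I would use the rescaling \eqref{eq:rescale}: $\rho_{k+1}\ell_{k+1}=\rho_k\ell_k+w_k\ip{\Phi(x^*)}{s_k-x^*}$. For $k\geq N_0$, Lemma~\ref{lem:uniform-fw} gives $s_k\in\operatorname{vert}(F_*)$, a common index for all trajectories. For such $s_k$ we have $\ip{\Phi(x^*)}{s_k-x^*}=0$, because $x^*\in F_*$ and both points minimize $\ip{\Phi(x^*)}{\cdot}$. So $\rho_k\ell_k=\rho_{N_0}\ell_{N_0}\leq\rho_{N_0}\norm{\Phi(x^*)}D_K$ for all $k\geq N_0$, uniformly over the family. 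Dividing by $\rho_k\asymp k^\alpha$ gives $\ell_k=O(k^{-\alpha})$. Combining the two terms yields $G(x_k)=O(k^{-\alpha}+k^{-2r})=O(k^{-s})$.

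\textbf{Finite prefix and where care is needed.} For $k<N_0$ (and $k<N$), the gaps are bounded by $\max_K V$, so enlarging the constants covers the prefix. The only point needing attention is the uniformity of $\rho_{N_0}$: it requires that $N_0$ is the common identification index from Lemma~\ref{lem:uniform-fw} and does not depend on the trajectory. The rest is bookkeeping of exponents.
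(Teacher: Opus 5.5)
Your proposal is correct and follows essentially the same route as the paper. You use the same three-term splitting to bound $V$ linearly by the distance, and for the dual gap you combine Lemma~\ref{lem:dual-growth} with the fact that $\ip{\Phi(x^*)}{x_k-x^*}$ contracts exactly by $(1-\gamma_k)$ after the common identification index from Lemma~\ref{lem:uniform-fw}, which gives the uniform bound $\rho_{N_0}\norm{\Phi(x^*)}D_K/\rho_k=O(k^{-\alpha})$, just as in the paper.
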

\begin{proof}
Let $D_K=\operatorname{diam}(K)$ and let $L$ be a Lipschitz constant for
$\Phi$ on $K$. The VI and Lipschitz continuity give
\[
 V(x)\leq(LD_K+\norm{\Phi(x^*)})\norm{x-x^*}.
\]
Indeed, split $\ip{\Phi(x)}{x-s}$ into
$\ip{\Phi(x)-\Phi(x^*)}{x-s}$,
$\ip{\Phi(x^*)}{x-x^*}$, and the nonpositive term
$\ip{\Phi(x^*)}{x^*-s}$. The distance theorem proves the bounds for $V$
and squared distance.
After the common face-identification index $N$, all selected vertices
belong to $F_*$. Hence $b_k:=\ip{\Phi(x^*)}{x_k-x^*}$ satisfies
\[
 b_{k+1}=(1-\gamma_k)b_k,\qquad
 b_k=\frac{\rho_N}{\rho_k}b_N=O(k^{-\alpha})\quad(k\geq N).
\]
Lemma~\ref{lem:dual-growth} yields
$G(x_k)=O(k^{-\alpha}+k^{-2r})=O(k^{-s})$.
The identification index is uniform and $b_N\leq\norm{\Phi(x^*)}D_K$,
so the constants are uniform as stated.
\end{proof}

For the general exact schedules of Corollary~\ref{cor:general-schedules},
the same argument gives
\[
 V(x_m)=O(W_m/\rho_m),\qquad
 G(x_m)=O\bigl(\rho_m^{-1}+(W_m/\rho_m)^2\bigr).
\]
Thus the schedule dependence of the dual gap differs from that of
distance even before specializing to reciprocal steps.

\subsection{Lower bounds and schedule optimality}\label{sec:optimality}
We now prove Theorem~\ref{thm:schedule-characterization}. The two
coordinates of one fixed quadratic problem isolate the oscillation and
memory mechanisms (Figure~\ref{fig:lower-bound}); together with
Corollary~\ref{cor:general-schedules} they characterize the rate for
every schedule with eventually monotone rescaled weights.

\input{lower_bound.tex}

\begin{proof}[Proof of Theorem~\ref{thm:schedule-characterization}]
On the square problem in the theorem, the solution is $x^*=(1/2,0)$.
Write $x_k=(\xi_k,\eta_k)$, $e_k=\norm{x_k-x^*}$, and the selected
vertex as $(a_k,b_k)$. The operator is affine, symmetric, and strongly
monotone with derivative $I$, and its oracle
separates by coordinate. If no step equals one, take $\eta_0=1$.
Then all second-coordinate selections are zero and
$\eta_n=\prod_{j=0}^{n-1}(1-\gamma_j)>0$.
Otherwise let $h<n$ be the last index of a unit step. Start with
$\eta_0=0$, select $b_k=0$ for $k<h$, and select $b_h=1$; these selections
are exact because the second-coordinate score is zero. Then
$\eta_{h+1}=1$, and all later second-coordinate selections are uniquely
zero. In either case $c_n:=\eta_n>0$ depends only on the schedule and
$n$, and
\begin{equation}\label{eq:pointwise-lower}
 \eta_k=c_n\prod_{j=n}^{k-1}(1-\gamma_j)
       =c_n\rho_k^{-1},\qquad
 e_k\geq c_n\rho_k^{-1}\qquad(k\geq n).
\end{equation}
Choose any exact first-coordinate selections. Lemma~\ref{lem:uniform-fw}
gives $\xi_k\to1/2$. Since $a_k\in\{0,1\}$, eventually
$|a_k-\xi_k|\geq1/4$, whence
\begin{equation}\label{eq:oscillation-lower}
 \max\{e_k,e_{k+1}\}\geq\tfrac12|\xi_{k+1}-\xi_k|
 =\tfrac12\gamma_k|a_k-\xi_k|\geq\gamma_k/8.
\end{equation}
With $c=\min\{c_n,1/8\}>0$, the two bounds give
$\Delta_k\geq c\max\{\gamma_k,\rho_k^{-1}\}=cq_k$ eventually.
If $e_k=o(k^{-1})$, the last inequality
would imply $\gamma_k=o(k^{-1})$. For every $\varepsilon>0$, eventually
$\gamma_j\leq\varepsilon/(2(j+1))$ and $\gamma_j\leq1/2$.
The inequality $\log(1-t)\geq-2t$ on $[0,1/2]$ then gives
$\eta_k\geq c_\varepsilon k^{-\varepsilon}$, with
$c_\varepsilon>0$. Taking $\varepsilon=1/2$ contradicts $e_k=o(k^{-1})$.
Together with the harmonic upper
bound in Theorem~\ref{thm:app-fw}, this proves the stated optimality.
\end{proof}

\paragraph{Matching upper bounds for monotone weights.}\label{par:matching-upper-bounds}
We now justify the comparison stated after
Theorem~\ref{thm:schedule-characterization}. For $m>n$ let
$W_m=\max_{n\leq j<m}w_j$ as in Corollary~\ref{cor:general-schedules}.
If the weights are nondecreasing after $N$, they are bounded below by
$w_N>0$. The finitely many earlier weights are bounded, so
$W_{k+1}=O(w_k)$. Since $\gamma_k\to0$,
\[
 \Delta_k\leq C\frac{W_{k+1}}{\rho_k}
 =O\!\left(\frac{w_k}{\rho_k}\right)
 =O(\gamma_k).
\]
Here $\rho_k^{-1}=O(\gamma_k)$ follows from
$\gamma_k\rho_k=w_k(1-\gamma_k)\geq w_N/2$ eventually.
If the weights are eventually nonincreasing, then $\sup_{k\geq n}w_k<\infty$.
The corollary gives $\Delta_k=O(\rho_k^{-1})$, and
$\gamma_k=w_k/\rho_{k+1}=O(\rho_k^{-1})$.
Thus in both cases $\Delta_k=O(q_k)$, matching the lower order in
Theorem~\ref{thm:schedule-characterization}.

With eventually nondecreasing weights one has
$W_m/\rho_m\asymp\gamma_{m-1}$, with equality when the weights are
nondecreasing from $n$ on; for eventually nonincreasing weights,
$W_m/\rho_m\asymp\rho_m^{-1}$. If the steps have upward jumps,
$\gamma_{m-1}$ need not be comparable to $\gamma_m$; this index
distinction matters when passing from a one-iterate bound to $\Delta_k$.
The retained-weight term in $q_k=\max\{\gamma_k,\rho_k^{-1}\}$ has a
separate role: it accounts for memory. Even without jumps, reciprocal
steps with $0<\alpha<1$ have $\rho_k^{-1}\asymp k^{-\alpha}$, which
dominates $\gamma_k\asymp k^{-1}$.

\paragraph{Tight reciprocal-step exponents.}
For $\gamma_k=\alpha/(k+\beta)$ the weights are monotone and
$\rho_k^{-1}\asymp k^{-\alpha}$. Theorem~\ref{thm:schedule-characterization}
therefore gives the tight order $k^{-\min\{\alpha,1\}}$ on the same
two-dimensional example. For $0<\alpha<1$, its vertical coordinate alone
has this order at every sufficiently large iterate. For $\alpha\geq1$,
the oscillation bound excludes $o(k^{-1})$.
In the harmonic one-dimensional example $K=[0,1]$, $\Phi(x)=x-1/2$,
one also has $x_T=m_T/T$ with $m_T$ integral, so the error is at least
$1/(2T)$ at every odd horizon. Figure~\ref{fig:schedule-rates} illustrates
the two mechanisms on the fixed two-dimensional problem.

\input{schedule_rates.tex}

\paragraph{Selecting the schedule: rate order and constants.}
The factor in Corollary~\ref{cor:general-schedules} also explains why
harmonic steps are a natural choice. Its normalization gives
$\rho_{k+1}-\rho_k=w_k$ and hence
\[
 \rho_m=1+\sum_{k=n}^{m-1}w_k,\qquad
 \frac{W_m}{\rho_m}\geq
 \frac{1-\rho_m^{-1}}{m-n},\qquad
 \liminf_{m\to\infty}m\frac{W_m}{\rho_m}\geq1.
\]
For $\gamma_k=\alpha/(k+\beta)$ with $\alpha\geq1$, the weights are
nondecreasing and the factor is exactly
\[
 \frac{W_m}{\rho_m}=\frac{w_{m-1}}{\rho_m}
 =\gamma_{m-1}=\frac{\alpha}{m-1+\beta}.
\]
Thus $\alpha=1$ attains the smallest possible asymptotic coefficient of
this schedule factor. All $\alpha\geq1$ attain the optimal distance order;
$0<\alpha<1$ gives the slower tight order $k^{-\alpha}$.
Likewise, the bound $O(k^{-a})$ for $\gamma_k=c/(k+b)^a$, $0<a<1$,
cannot be improved to $o(k^{-a})$, since the same oscillation argument
gives the adjacent-iterate lower bound $\gamma_k/8$.
This comparison concerns the schedule factor only. The full constant in
\eqref{eq:general-schedule-rate} also depends on the schedule through the
index after which all trajectories are sufficiently close to $x^*$, so it does not show that
$1/(k+1)$ has a smaller worst-case constant or a shorter transient than
$2/(k+2)$, and the lower bounds say nothing about adaptive step rules.

\begin{proposition}[Tight gap exponents]\label{prop:gap-sharpness}
For reciprocal steps $\alpha/(k+\beta)$, the exponents
$\min\{\alpha,1\}$ for $V$ and $\min\{\alpha,2\}$ for $G$ in
Corollary~\ref{cor:gap-distance} are tight in the worst case, measured
over two consecutive iterates. For every positive, vanishing,
nonsummable prescribed schedule, a trajectory on the square problem of
Theorem~\ref{thm:schedule-characterization} has
$V(x_k)\ne o(k^{-1})$ and $G(x_k)\ne o(k^{-2})$.
\end{proposition}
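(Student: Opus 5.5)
Work on the square instance $K=[0,1]^2$, $\Phi(\xi,\eta)=(\xi-1/2,\eta)$ of Theorem~\ref{thm:schedule-characterization}, with $x^*=(1/2,0)$ and $\Phi(x^*)=(0,0)$. The exposed face is therefore $F_*=K$ and $T_*=\R^2$. The plan is to compute both gaps in closed form and then reuse the two lower-bound mechanisms of that proof: oscillation in $\xi$ and retained weight in $\eta$.

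\emph{Step 1: closed forms.} For $x=(\xi,\eta)$ the minimizing vertex gives
\[
 V(x)=\xi(\xi-\tfrac12)-\min\{0,\xi-\tfrac12\}+\eta^2=\tfrac12|\xi-\tfrac12|+(\xi-\tfrac12)^2+\eta^2 ,
\]
after checking both signs of $\xi-1/2$. For the dual gap, $G(x)=\max_{s\in K}\ip{\Phi(s)}{x-s}$ separates by coordinate. In the first coordinate we maximize $(\sigma-\tfrac12)(\xi-\sigma)$ over $\sigma\in[0,1]$, with optimum $\sigma=(\xi+1/2)/2\in[0,1]$ and value $(\xi-1/2)^2/4$. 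In the second we maximize $\tau(\eta-\tau)$ over $\tau\in[0,1]$, with value $\eta^2/4$ for $\eta\in[0,1]$. Hence
\[
 G(x)=\tfrac14\norm{x-x^*}^2 \quad\text{and}\quad V(x)\geq\tfrac12|\xi-\tfrac12|+\eta^2 .
\]

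\emph{Step 2: the schedule-free claim.} Take the trajectory of Theorem~\ref{thm:schedule-characterization}. The oscillation bound \eqref{eq:oscillation-lower} gives
\[
 \max\{|\xi_k-\tfrac12|,|\xi_{k+1}-\tfrac12|\}\geq\gamma_k/8 ,
\]
and \eqref{eq:pointwise-lower} gives $\eta_k=c_n\rho_k^{-1}$. Combining these:
\[
 \max\{V(x_k),V(x_{k+1})\}\geq c\,q_k \quad\text{and}\quad \max\{G(x_k),G(x_{k+1})\}=\tfrac14\Delta_k^2\geq c' q_k^2 .
\]
Suppose $V(x_k)=o(k^{-1})$. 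Then $\gamma_k=o(k^{-1})$ and $\eta_k^2=o(k^{-1})$. Likewise, $G(x_k)=o(k^{-2})$ forces $e_k=o(k^{-1})$, which is already excluded by the argument closing the proof of Theorem~\ref{thm:schedule-characterization}.

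For $V$, the same argument works. If $\gamma_k=o(k^{-1})$, that proof shows $\eta_k\geq c_\varepsilon k^{-\varepsilon}$ for every $\varepsilon>0$. Then $\eta_k^2\geq c_\varepsilon^2k^{-2\varepsilon}$, and $\varepsilon=1/4$ contradicts $\eta_k^2=o(k^{-1})$. This settles the schedule-free assertion.

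\emph{Step 3: reciprocal exponents.} For $\gamma_k=\alpha/(k+\beta)$ we have $q_k\asymp k^{-\min\{\alpha,1\}}$, so the $V$ bound matches $\min\{\alpha,1\}$ over two consecutive iterates. For $G$, the square instance only yields $q_k^2\asymp k^{-2\min\{\alpha,1\}}$. This matches $\min\{\alpha,2\}$ only when $\alpha\geq 2$ or $\alpha\leq1$; for $1<\alpha<2$ the target is $k^{-\alpha}$, which is the linear retained-weight term $\ip{\Phi(x^*)}{x_k-x^*}$. That term vanishes identically on the square, so a second instance is needed.

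I would take $K=[0,1]^2$ and $\Phi(\xi,\eta)=(\xi-\tfrac12,1)$. This map is monotone, the exposed face is $\eta=0$, and it is coercive along that face. Start with $\eta_0=1$, or use the unit-step device of the earlier proof. Every later vertex has second coordinate $0$, so $\eta_k=c_n\rho_k^{-1}\asymp k^{-\alpha}$. Choosing $s=x^*$ in the definition of $G$ gives
\[
 G(x_k)\geq\ip{\Phi(x^*)}{x_k-x^*}=\eta_k\asymp k^{-\alpha} .
\]
Then $G(x_k)\gtrsim k^{-\min\{\alpha,2\}}$ is obtained by taking the better of the two instances in each regime of $\alpha$.

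\emph{Main obstacle.} Once the gaps are computed, the lower bounds are routine. The real issue is recognizing that tightness of the $G$-exponent for $1<\alpha<2$ needs a boundary-solution instance with $\Phi(x^*)\neq0$. The proposition's phrase "tight in the worst case" should accordingly be read as over admissible instances, with the square instance covering $\alpha\leq1$ and $\alpha\geq2$.
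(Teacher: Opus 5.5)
Your closed forms and the schedule-free part (Step~2's contradictions for $V(x_k)\ne o(k^{-1})$ and $G(x_k)\ne o(k^{-2})$) are correct and match the paper. The reciprocal-step tightness in Step~3, however, has two genuine gaps.

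\textbf{First gap: the square does not give exponent $\alpha$ for $V$.} The inequality $\max\{V(x_k),V(x_{k+1})\}\geq c\,q_k$ does not follow, and it is in fact false on the square. By your own formula, $V$ sees the vertical coordinate only through $\eta^2$. So the retained weight contributes $\eta_k^2=c_n^2\rho_k^{-2}$, not $\rho_k^{-1}$. For $0<\alpha<1$ one has $q_k\asymp k^{-\alpha}$. But along every trajectory on the square, the rescaled horizontal error $\rho_k(\xi_k-\tfrac12)$ moves by $w_k/2$ toward zero at each step, with $w_k\downarrow0$ and $\sum_k w_k=\infty$. It therefore eventually stays within $O(w_k)$. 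Hence $|\xi_k-\tfrac12|=O(\gamma_k)$ and $V(x_k)=O(k^{-1}+k^{-2\alpha})=o(k^{-\alpha})$, so the square cannot certify the exponent $\alpha$ for $V$ when $\alpha<1$.

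The same arithmetic refutes your remark that the square covers $G$ for $\alpha\leq1$: there $q_k^2\asymp k^{-2\alpha}$, not $k^{-\alpha}$. The square settles only $V$ for $\alpha\geq1$ and $G$ for $\alpha\geq2$. For the remaining ranges the retained weight must enter linearly. That requires $\Phi(x^*)\neq0$ and the bound $V\geq G\geq\ip{\Phi(x^*)}{x_k-x^*}$. The paper uses its second instance for $V$ with $\alpha\leq1$, not only for $G$ with $\alpha\leq2$.

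\textbf{Second gap: your boundary instance fails when $\beta=\alpha$.} This case includes $1/(k+1)$ and $2/(k+2)$. The second score of $\Phi(\xi,\eta)=(\xi-\tfrac12,1)$ is identically $1$, so every selected vertex has second coordinate $0$. With $\gamma_0=1$ you get $x_1=s_0$, hence $\eta_k=0$ for all $k\geq1$. Then $G(x_k)=\tfrac14(\xi_k-\tfrac12)^2$, which carries no retained-weight term. The unit-step device from the proof of Theorem~\ref{thm:schedule-characterization} does not transfer. It relies on a zero second score, which makes the selection $b_h=1$ exact, and that never happens here.

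What is missing is an instance whose second score is positive at $x^*$ but negative (or tied) at some starting point. The paper's $\widehat\Phi(\xi,\eta)=(\xi-\tfrac12-\eta,\ \xi-\tfrac12+\eta+\tfrac14)$ does exactly this. It is strongly monotone with $\widehat\Phi(x^*)=(0,\tfrac14)$, while $\widehat\Phi(0,0)=(-\tfrac12,-\tfrac14)$ forces $s_0=(1,1)$. So $\eta_1=1$ and $\eta_k\geq\prod_{j=1}^{k-1}(1-\gamma_j)\asymp k^{-\alpha}$. This gives $V(x_k)\geq G(x_k)\geq\eta_k/4$ for every $\beta\geq\alpha$.
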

\begin{proof}
On that square problem, write $\delta=\xi-1/2$. Direct maximization gives
\begin{equation}\label{eq:square-gaps}
 V(\xi,\eta)=\delta^2+\tfrac12|\delta|+\eta^2,\qquad
 G(\xi,\eta)=\tfrac14(\delta^2+\eta^2).
\end{equation}
For $G$, the maximizing test point is the midpoint of $(\xi,\eta)$ and
$x^*$. The horizontal-increment argument leading to
\eqref{eq:oscillation-lower} gives
$\max\{|\delta_k|,|\delta_{k+1}|\}\geq\gamma_k/8$ eventually. Thus
\[
 \max\{V(x_k),V(x_{k+1})\}\geq\gamma_k/16,\qquad
 \max\{G(x_k),G(x_{k+1})\}\geq\gamma_k^2/256.
\]
The equality for $G$ and Theorem~\ref{thm:schedule-characterization}
exclude $G(x_k)=o(k^{-2})$. If $V(x_k)=o(k^{-1})$, the first inequality
would give $\gamma_k=o(k^{-1})$. The product estimate in the proof of
that theorem gives $\eta_k\geq c k^{-1/4}$ eventually, contradicting
$V(x_k)\geq\eta_k^2=o(k^{-1})$.

To realize the retained-weight obstruction linearly in both gaps, use
another fixed instance on $[0,1]^2$:
\[
 \widehat\Phi(\xi,\eta)
 =\bigl(\xi-\tfrac12-\eta,\ \xi-\tfrac12+\eta+\tfrac14\bigr),
 \qquad x^*=(\tfrac12,0).
\]
Its derivative has symmetric part $I$, so it is strongly monotone, and
$\widehat\Phi(x^*)=(0,1/4)$ verifies the VI. Testing the dual gap at
$x^*$ shows $V(\xi,\eta)\geq G(\xi,\eta)\geq\eta/4$.
If $\beta>\alpha$, start with $\eta_0=1$. If $\beta=\alpha$, start at
$(0,0)$; the unique first oracle vertex is $(1,1)$, so $\eta_1=1$.
In both cases $\eta_1>0$: when $\beta>\alpha$,
$\eta_1\geq1-\gamma_0>0$, and when $\beta=\alpha$, $\eta_1=1$.
All steps from index one onward are below one, and all selected vertical
coordinates are nonnegative. Consequently
\[
 \eta_k\geq\eta_1\prod_{j=1}^{k-1}(1-\gamma_j)\asymp k^{-\alpha}.
\]
This supplies the lower exponent $\alpha$ when $\alpha\leq1$ for $V$
and when $\alpha\leq2$ for $G$. For the remaining ranges, the square
oscillation bounds give exponents one and two, respectively.
\end{proof}

\subsection{Oracle counts for VI-gap accuracy}\label{sec:oracle-counts}
We now translate the gap rates and their adjacent-iterate lower bounds
into oracle counts. An exact update uses one evaluation of $\Phi$ and one
vertex-returning LMO call, and the Frank--Wolfe gap comes for free from
the same pair, $V(x_k)=\ip{\Phi(x_k)}{x_k-s_k}$, so it serves as a
stopping certificate. Evaluating $G$ would require a different
maximization; our bound on $G(x_k)$ is a guarantee on the returned
iterate, not a procedure for computing $G$. Because the lower bounds hold
over two consecutive iterates, the natural count is the index of
\emph{sustained gap accuracy}: for $R\in\{V,G\}$,
\begin{equation}\label{eq:sustained-accuracy}
 N_{\mathrm{tail}}^R(\varepsilon;x)
 :=\min\{N\in\mathbb N_0:\ R(x_k)\leq\varepsilon
                              \text{ for every }k\geq N\}.
\end{equation}
Reaching $x_N$ takes $N$ calls to each oracle, and evaluating $V(x_N)$
costs at most one additional pair. The upper bounds below also bound the
first accurate iterate; the matching lower bounds concern sustained
accuracy rather than the first successful stopping certificate.

\begin{corollary}[Oracle counts]
\label{cor:oracle-tail}
Fix an instance satisfying Theorem~\ref{thm:app-fw} and reciprocal steps
$\gamma_k=\alpha/(k+\beta)$, $\beta\geq\alpha>0$. Put
$r=\min\{\alpha,1\}$ and $s=\min\{\alpha,2\}$. Then
\[
 N_{\mathrm{tail}}^V(\varepsilon;x)=O(\varepsilon^{-1/r}),\qquad
 N_{\mathrm{tail}}^G(\varepsilon;x)=O(\varepsilon^{-1/s})
 \qquad(\varepsilon\downarrow0),
\]
uniformly over starts and vertex selections for the fixed instance and
schedule. Each order is attained on one of the fixed two-dimensional
examples in Proposition~\ref{prop:gap-sharpness}.
For every positive, vanishing, nonsummable prescribed schedule, a
trajectory on the square problem has
$N_{\mathrm{tail}}^V(\varepsilon;x)\ne o(\varepsilon^{-1})$ and
$N_{\mathrm{tail}}^G(\varepsilon;x)\ne o(\varepsilon^{-1/2})$.
Thus $\alpha\geq1$ is asymptotically order-optimal for $V$, and
$\alpha\geq2$ is asymptotically order-optimal for $G$, within these
prescribed vertex updates and this sustained-accuracy criterion.
\end{corollary}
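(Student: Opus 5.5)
The upper bounds follow by inverting the gap rates of Corollary~\ref{cor:gap-distance}. That corollary gives a constant $C$ and an index $k_1$, uniform over starts and vertex selections, with $V(x_k)\leq Ck^{-r}$ and $G(x_k)\leq Ck^{-s}$ for all $k\geq k_1$. For $R=V$, every $k\geq\max\{k_1,(C/\varepsilon)^{1/r}\}$ satisfies $V(x_k)\leq\varepsilon$, so $N_{\mathrm{tail}}^V(\varepsilon;x)\leq k_1+(C/\varepsilon)^{1/r}+1=O(\varepsilon^{-1/r})$. The case $R=G$ is the same with exponent $s$. Both $C$ and $k_1$ are independent of the trajectory, so the bounds are uniform as claimed.

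For attainment of each order, we use the trajectories built in the proof of Proposition~\ref{prop:gap-sharpness}. The basic fact is that if $\max\{R(x_k),R(x_{k+1})\}\geq c\,k^{-p}$ for all large $k$, then $N_{\mathrm{tail}}^R(\varepsilon)\geq c'\varepsilon^{-1/p}$. Indeed, let $k$ be the largest index with $ck^{-p}>\varepsilon$, so $k\asymp\varepsilon^{-1/p}$. Then one of $x_k,x_{k+1}$ violates the tolerance, and hence $N_{\mathrm{tail}}^R\geq k$.
\begin{itemize}
\item On the square example with $\alpha\geq1$, the oscillation bounds $\max\{V(x_k),V(x_{k+1})\}\geq\gamma_k/16\asymp k^{-1}$ and $\max\{G(x_k),G(x_{k+1})\}\geq\gamma_k^2/256\asymp k^{-2}$ give the orders $\varepsilon^{-1}$ for $V$ and, when $\alpha\geq2$, $\varepsilon^{-1/2}$ for $G$.
\item When $\alpha<1$ for $V$, or $\alpha<2$ for $G$, the instance $\widehat\Phi$ gives the pointwise bound $V\geq G\geq\eta_k/4\gtrsim k^{-\alpha}$, which yields the orders $\varepsilon^{-1/\alpha}=\varepsilon^{-1/r}$ and $\varepsilon^{-1/s}$.
\item For $1\leq\alpha<2$, the $G$-order $\varepsilon^{-1/\alpha}$ again comes from $\widehat\Phi$.
\end{itemize}

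For arbitrary positive, vanishing, nonsummable schedules, take the square-problem trajectory of Theorem~\ref{thm:schedule-characterization} and argue by contradiction. Suppose $N_{\mathrm{tail}}^V(\varepsilon)=o(\varepsilon^{-1})$. For each $k$, set $\varepsilon_k:=\sup_{j\geq k}V(x_j)$. Since $N_{\mathrm{tail}}^V(\varepsilon_k)\leq k$, this should force $V(x_k)=o(k^{-1})$. That in turn contradicts Proposition~\ref{prop:gap-sharpness}, which says $V(x_k)\neq o(k^{-1})$.

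\textbf{Main obstacle.} This inversion step is delicate. The implication ``$N_{\mathrm{tail}}(\varepsilon)=o(\varepsilon^{-1})$ implies $\sup_{j\geq k}V(x_j)=o(k^{-1})$'' needs a careful tail monotonicity argument. First note that $\varepsilon_k\downarrow0$, because $V(x_k)\to0$ by uniform attraction. Fix $\delta>0$. For small $\varepsilon$ we have $N_{\mathrm{tail}}(\varepsilon)\leq\delta\varepsilon^{-1}$. Now take $\varepsilon=\delta/k$: then $N_{\mathrm{tail}}(\delta/k)\leq k$, so $V(x_j)\leq\delta/k$ for all $j\geq k$. Hence $kV(x_k)\leq\delta$ for all large $k$, which is the desired $o(k^{-1})$.

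The same argument with $\varepsilon=\delta/k^2$ handles $G$ and contradicts $G(x_k)\neq o(k^{-2})$ from Proposition~\ref{prop:gap-sharpness}. Finally, the order-optimality statements follow by comparing these lower bounds with the upper bounds at $\alpha\geq1$ and $\alpha\geq2$.
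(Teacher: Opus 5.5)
Your proposal is correct and follows essentially the same route as the paper. You invert the uniform rates of Corollary~\ref{cor:gap-distance} for the upper bounds, turn the adjacent-iterate lower bounds of Proposition~\ref{prop:gap-sharpness} into tail-index lower bounds, and substitute $\varepsilon=\delta k^{-a}$ to show that $N_{\mathrm{tail}}^R=o(\varepsilon^{-1/a})$ would force $R(x_k)=o(k^{-a})$; the auxiliary sequence $\varepsilon_k=\sup_{j\geq k}V(x_j)$ and the appeal to uniform attraction are unnecessary, since that substitution alone closes the contradiction.
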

\begin{proof}
For either residual, write $R(x_k)\leq Ck^{-a}$, with $a=r$ for $V$
and $a=s$ for $G$, using Corollary~\ref{cor:gap-distance}. Then
$N_{\mathrm{tail}}^R(\varepsilon;x)\leq
\max\{1,\lceil(C/\varepsilon)^{1/a}\rceil\}$.
On the matching example, Proposition~\ref{prop:gap-sharpness} gives
$\max\{R(x_k),R(x_{k+1})\}\geq ck^{-a}$ eventually. Since the
corresponding tail index $N$ tends to infinity, both residuals at
$N,N+1$ are at most $\varepsilon$, and
$\varepsilon\geq cN^{-a}$ gives the matching lower order.
Finally, for any residual sequence and $a>0$,
$N_{\mathrm{tail}}^R(\varepsilon;x)=o(\varepsilon^{-1/a})$
would imply $R(x_k)=o(k^{-a})$: for each $b>0$, substitute
$\varepsilon=bk^{-a}$ to obtain $N_{\mathrm{tail}}^R(bk^{-a};x)<k$
eventually and hence $k^aR(x_k)\leq b$.
Apply this with $a=1$ and $a=2$ to the last assertion of
Proposition~\ref{prop:gap-sharpness}.
\end{proof}

Distance accuracy $\norm{x_k-x^*}\leq\varepsilon$ has its own tight tail
count $O(\varepsilon^{-1/r})$ by
Theorems~\ref{thm:app-fw} and~\ref{thm:schedule-characterization}; it is
a different criterion from VI-gap accuracy. These oracle bounds have
instance-dependent constants that may also
depend on the schedule. They do not establish minimax oracle complexities
over a normalized problem class, as the following classical obstruction
makes explicit.

\begin{proposition}[Standard simplex obstruction for VI-gap accuracy]
\label{prop:simplex-oracle}
For $d\geq2$, let
\[
 K_d=\conv\{e_1,\ldots,e_d\}\subset\R^d,\qquad
 u_d=d^{-1}(1,\ldots,1),\qquad \Phi_d(x)=x-u_d,
\]
where $e_i$ are the standard basis vectors. The solution is $u_d$, the
Lipschitz and strong-monotonicity constants are one, and
$\operatorname{diam}(K_d)=\sqrt2$.
Consider any method given a starting vertex and returning, after $t$
vertex-LMO calls, a point $y_t$ in the convex hull of that vertex and
the returned vertices. Then
\begin{align}
 \norm{y_t-u_d}^2&\geq\frac1{t+1}-\frac1d,
                         \label{eq:simplex-distance-lower}\\
 V(y_t)\geq G(y_t)&=\tfrac14\norm{y_t-u_d}^2
       \geq\tfrac14\left(\frac1{t+1}-\frac1d\right).
                         \label{eq:simplex-gap-lower}
\end{align}
For either gap criterion with tolerance $\varepsilon$, necessarily
$t+1\geq(4\varepsilon+1/d)^{-1}$. Taking $d\geq\varepsilon^{-1}$
gives a lower bound $(5\varepsilon)^{-1}-1$ on vertex-LMO calls.
\end{proposition}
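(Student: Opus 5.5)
The proof checks the basic facts about the instance, then derives the distance bound by a combinatorial argument on the barycentric weights of $y_t$. After that it evaluates the gaps on the simplex and converts the bounds into an oracle count.

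\emph{Basic facts.} The map $\Phi_d(x)=x-u_d$ has Jacobian $I$, so both its Lipschitz constant and its strong-monotonicity constant equal one. Since $\Phi_d(u_d)=0$, the point $u_d$ solves the VI, and strong monotonicity makes it the unique solution. The diameter is $\norm{e_i-e_j}=\sqrt2$.

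\emph{Distance bound.} After $t$ calls, $y_t$ lies in the convex hull of at most $t+1$ distinct vertices $e_i$, $i\in S$, with $|S|\leq t+1$. Hence $y_t=\sum_{i\in S}\lambda_ie_i$ with $\lambda$ a probability vector. Because $\sum_i(\lambda_i-1/d)=0$, expanding the square gives
\[
\norm{y_t-u_d}^2=\norm{\lambda}^2-\tfrac1d .
\]
Cauchy--Schwarz on the support gives $\norm\lambda^2\geq 1/|S|\geq 1/(t+1)$, which proves \eqref{eq:simplex-distance-lower}. If $t+1\geq d$ the right-hand side is nonpositive and the bound is trivial.

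\emph{Gap identities.} For $G$, compute
\[
G(x)=\max_{s\in K_d}\ip{s-u_d}{x-s}.
\]
Write $e=x-u_d$ and $h=s-u_d$; both lie in the hyperplane $\sum_i x_i=0$. Then $\ip h{e-h}\leq\norm e^2/4$, with equality at $h=e/2$. The maximizer $s=(x+u_d)/2$ is the midpoint, which lies in $K_d$ by convexity. Therefore $G(x)=\tfrac14\norm{x-u_d}^2$ exactly, and the inequality $V\geq G$ follows from monotonicity as noted after \eqref{eq:vi-gaps}. Combining this with the distance bound gives \eqref{eq:simplex-gap-lower}.

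\emph{Oracle count.} Suppose either gap is at most $\varepsilon$. Then
\[
\tfrac14\left(\tfrac1{t+1}-\tfrac1d\right)\leq\varepsilon,
\]
which rearranges to $t+1\geq(4\varepsilon+1/d)^{-1}$. With $d\geq\varepsilon^{-1}$ we have $1/d\leq\varepsilon$, so $t+1\geq(5\varepsilon)^{-1}$, i.e.\ $t\geq(5\varepsilon)^{-1}-1$.

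Each step is elementary. The only point needing care is the exact identity for $G$: one must check that the unconstrained maximizer, the midpoint, is feasible, so that the value is exactly $\tfrac14\norm{x-u_d}^2$ and not merely bounded by it.
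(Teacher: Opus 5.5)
Your proposal is correct and takes essentially the same route as the paper. The paper's proof also uses the at-most-$(t+1)$-coordinate support and Cauchy--Schwarz to get $\norm{y_t}^2\geq 1/(t+1)$, combined with $\norm{y_t-u_d}^2=\norm{y_t}^2-1/d$; it then completes the square with the feasible midpoint $(y+u_d)/2$ to get $G=\tfrac14\norm{y-u_d}^2$, and uses $V\geq G$ and rearrangement for the oracle count. You also check the instance's basic facts (solution, constants, diameter), which the paper states without proof.
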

\begin{proof}[Proof (the sparsity argument of {\cite[Theorem~1]{Lan2013}})]
The output has at most $t+1$ nonzero coordinates. Nonnegativity and
unit coordinate sum give $\norm{y_t}^2\geq1/(t+1)$ by
Cauchy--Schwarz, while
$\norm{y_t-u_d}^2=\norm{y_t}^2-1/d$.
For any $y\in K_d$, completing the square gives
\[
 G(y)=\max_{z\in K_d}\ip{z-u_d}{y-z}
 =\tfrac14\norm{y-u_d}^2,
\]
because the maximizing point $(y+u_d)/2$ belongs to $K_d$.
Use $V\geq G$ and rearrange to obtain the oracle lower bound.
\end{proof}

Thus the sparsity obstruction is $\Omega(\varepsilon^{-1})$ for VI-gap
accuracy when the dimension may grow, and $\Omega(\varepsilon^{-2})$ for
distance accuracy by \eqref{eq:simplex-distance-lower}. It applies to
adaptive queries and arbitrary convex combinations of acquired vertices,
and it counts vertex-LMO calls only. There is no conflict with the
$O(\varepsilon^{-1/2})$ dual-gap count of
Corollary~\ref{cor:oracle-tail}, whose constants may deteriorate as $K_d$
varies; a class-uniform upper bound with controlled geometric constants
would be needed before asserting minimax optimality.

\section{Extensions, Limitations and Open Problems}\label{sec:outlook}
\subsection{Inexact oracles and robustness}\label{sec:inexact-extensions}
The preceding results concern exact vertex oracles. The same argument
also yields quantitative extensions to vanishing oracle errors and
bounded delays.

\paragraph{Vanishing oracle errors.}
The analysis tolerates vertices with an additive score error:
\begin{equation}\label{eq:inexact-fw}
 \begin{split}
 x_{k+1}&=x_k+\gamma_k(s_k-x_k),\qquad s_k\in\operatorname{vert}(K),\\
 \ip{\Phi(x_k)}{s_k}&\leq\min_{s\in K}\ip{\Phi(x_k)}s+\delta_k.
 \end{split}
\end{equation}
\begin{theorem}[Inexact vertex oracles]\label{thm:inexact}
Under the assumptions of Theorem~\ref{thm:app-fw}, fix $D\geq0$ and $p>0$.
There is a finite constant $C=C(K,\Phi,\alpha,\beta,D,p)$ such that every trajectory of
\eqref{eq:inexact-fw} with $0\leq\delta_k\leq D(k+1)^{-p}$ satisfies
\[
 \norm{x_T-x^*}\leq C T^{-\min\{\alpha,1,p\}}\qquad(T\geq1).
\]
The bound is uniform over starting points, admissible tolerances, and all
admissible vertex selections. In particular, $p\geq\min\{\alpha,1\}$
preserves the exact-oracle exponent.
\end{theorem}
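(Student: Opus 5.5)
The proof will rerun the three stages of the proof of Theorem~\ref{thm:app-fw}, with the oracle error $\delta_k$ carried as an extra score tolerance.

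\emph{Stage one: uniform attraction.} I would enlarge the family $\mathfrak X$ in Lemma~\ref{lem:uniform-fw} to $\delta_k$-approximate selections. Since $\delta_k\to0$ uniformly over the admissible tolerances, the shifted interpolations $W_j$ satisfy
\[
\dot W_j(t)\in\mathcal G_{\epsilon_j}(\widehat W_j(t)),\qquad \epsilon_j\to0,
\]
where $\mathcal G_\epsilon(x)$ is the set of $s-x$ with $s\in K$ $\epsilon$-optimal for $\ip{\Phi(x)}{\cdot}$. These maps converge graphically to $\mathcal G$. The compactness step (Theorem~\ref{thm:appendix-di-compactness}) then still produces a complete trajectory of $\dot W\in\mathcal G(W)$; the upper-semicontinuity argument passes to the limit with the extra $\epsilon_j$ in the inequalities. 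The exponential gap decay \eqref{eq:hough-gap-decay} gives the same contradiction, so attraction is uniform.

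\emph{Face identification.} Near $x^*$, a vertex $v\notin F_*$ has score exceeding the minimum by at least $g_*/2$. Once $\delta_k<g_*/2$, which holds after an index depending only on $D,p,g_*$, the selected vertices lie in $F_*$ from a common $N_0$ onward.

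\emph{Stages two and three.} After $N_0$ the rescaling \eqref{eq:rescale} and the tangential shift go through unchanged, with the normal part $q$ constant. Multiplying the inexact oracle inequality by $\rho_k$ gives
\[
\ip{A_*\widetilde y_k}{u_k}\le\min_{u\in U_*}\ip{A_*\widetilde y_k}u+2R_*\norm{r_k}+\rho_k\delta_k .
\]
This uses that the minimum over $K$ is at most the minimum over $F_*$, and that $\ip{\Phi(x^*)}u=0$ on $U_*$. The term $\rho_k\delta_k$ is not bounded, so I would not apply Lemma~\ref{lem:absorb} with a fixed $b$. Instead, for each horizon $m$, take $b_m=2R_*\epsilon Q_N+\max_{N\le k<m}\rho_k\delta_k$. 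Lemma~\ref{lem:absorb} then yields
\[
\norm{z_m}\lesssim 1+W_m+\max_{k<m}\rho_k\delta_k,
\]
with constants independent of $m$.

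\emph{Conclusion of the estimate.} Since $\rho_k\asymp k^\alpha$ is increasing, $\max_{k<m}\rho_k\delta_k\lesssim D\max_{k<m}k^{\alpha-p}$. This is $O(m^{(\alpha-p)_+})$; when $\alpha<p$ the terms $k^{\alpha-p}$ are bounded by their value at $k=1$, so the maximum is $O(1)$. Dividing by $\rho_m\asymp m^\alpha$ gives
\[
\frac{W_m}{\rho_m}=O(m^{-\min\{\alpha,1\}}),\qquad \frac{m^{(\alpha-p)_+}}{m^\alpha}=O(m^{-\min\{\alpha,p\}}).
\]
Combining the two gives $\norm{x_m-x^*}=O(m^{-\min\{\alpha,1,p\}})$, and the diameter bound covers the finite prefix.

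The main obstacle I expect is stage one. The uniform attraction index and the Taylor-neighborhood index $N$ must be uniform over all tolerance sequences bounded by $D(k+1)^{-p}$, and the limit inclusion must be shown to be the exact one. Uniformity itself is harmless, because the envelope $D(k+1)^{-p}$ is fixed and vanishes. The care is in checking that the compactness theorem tolerates the perturbed right-hand sides $\mathcal G_{\epsilon_j}$. If it does not apply as stated, I would fold the oracle error into $\widehat W_j$-dependent selections: an $\epsilon$-optimal $s$ for $x$ is exactly optimal for a nearby score, so the existing upper-semicontinuity argument should apply almost verbatim.
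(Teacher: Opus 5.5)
Your proposal is correct and follows the paper's proof essentially step for step. The shared steps are: uniform attraction and face identification for the enlarged family of $e_k$-optimal selections, then Lemma~\ref{lem:absorb} at each horizon $m$ with $b=2R_*\epsilon Q_N+\max_{N\le k<m}\rho_k\delta_k$, whose extra term is $O(1+m^{(\alpha-p)_+})$. The only difference is technical: you rerun the upper-semicontinuity argument with the extra $\epsilon_j$, whereas the paper lifts the tolerance into the state and applies Theorem~\ref{thm:appendix-di-compactness} verbatim to $\mathcal H(x,e)=\{(s-x,0):s\in\mathcal B_e(\Phi(x))\}$ on $K\times[0,\bar e]$ with $\widehat f_j=(\widehat W_j,e_{\ell_j})$.
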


A proof is given in Appendix~\ref{app:inexact-proof}.

\paragraph{Gap rates with inexact oracles.}
Under Theorem~\ref{thm:inexact}, put $r=\min\{\alpha,1,p\}$.
Then, uniformly over the admissible trajectories,
\[
 V(x_k)=O(k^{-r}),\qquad
 G(x_k)=O(k^{-\min\{\alpha,2,2p\}}),\qquad
 \norm{x_k-x^*}^2=O(k^{-2r}).
\]
These bounds are proved in Appendix~\ref{app:inexact-gaps}.

\paragraph{Bounded delays and score-evaluation errors.}
Errors in the evaluation of $\Phi$ and delayed scores can be reduced
to the preceding additive oracle-error model. For reciprocal steps,
a bounded delay and evaluation errors of order $k^{-1}$ preserve the
exact-oracle distance exponent.

\begin{corollary}\label{cor:delays}
Under the assumptions of Theorem~\ref{thm:app-fw}, fix an integer $\tau\geq0$
and $E_0,H_0\geq0$. Suppose the iteration selects a vertex $s_k$ using
indices $j_k$ and score vectors $\widehat\Phi_k$ satisfying
\[
 \max\{0,k-\tau\}\leq j_k\leq k,\qquad
 \norm{\widehat\Phi_k-\Phi(x_{j_k})}\leq\frac{E_0}{k+1},
\]
and
\[
 \ip{\widehat\Phi_k}{s_k}
 \leq\min_{s\in K}\ip{\widehat\Phi_k}s+\frac{H_0}{k+1}.
\]
Then $\norm{x_T-x^*}=O(T^{-\min\{\alpha,1\}})$, uniformly over all
admissible delays, errors, starting points, and vertex selections.
\end{corollary}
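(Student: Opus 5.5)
The plan is to reduce Corollary~\ref{cor:delays} to the inexact-oracle model \eqref{eq:inexact-fw} and then invoke Theorem~\ref{thm:inexact} with $p=1$. Since $p=1\geq\min\{\alpha,1\}$, that theorem already gives the exponent $\min\{\alpha,1\}$. So it suffices to show that the selected vertex $s_k$ satisfies
\[
 \ip{\Phi(x_k)}{s_k}\leq\min_{s\in K}\ip{\Phi(x_k)}s+\frac{D}{k+1}
\]
with a constant $D$ depending only on $K,\Phi,\alpha,\beta,\tau,E_0,H_0$.

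The key estimate compares the true score $\Phi(x_k)$ with the used score $\widehat\Phi_k$. For any vectors $p,\widehat p$ and any $s_k,s\in K$, the oracle inequality for $\widehat p$ gives
\[
 \ip p{s_k-s}=\ip{\widehat p}{s_k-s}+\ip{p-\widehat p}{s_k-s}\leq\frac{H_0}{k+1}+D_K\norm{p-\widehat p},
\]
where $D_K=\operatorname{diam}(K)$. Apply this with $p=\Phi(x_k)$ and $\widehat p=\widehat\Phi_k$, and split the difference as
\[
 \norm{\Phi(x_k)-\widehat\Phi_k}\leq\norm{\Phi(x_k)-\Phi(x_{j_k})}+\frac{E_0}{k+1}.
\]
Since $\Phi$ is $C^1$ near the compact set $K$, it is Lipschitz on $K$ with some constant $L$. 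Each FW step satisfies $\norm{x_{i+1}-x_i}\leq\gamma_iD_K$, and $k-j_k\leq\tau$. Therefore
\[
 \norm{x_k-x_{j_k}}\leq D_K\sum_{i=j_k}^{k-1}\gamma_i\leq D_K\,\tau\max_{k-\tau\leq i<k}\frac{\alpha}{i+\beta}.
\]
For $k\geq2\tau$, every such $i$ satisfies $i+\beta\geq k/2$, so this is at most $2\alpha\tau D_K/k\leq4\alpha\tau D_K/(k+1)$. For the finitely many $k<2\tau$, the crude bound $2LD_K\cdot D_K$ on the score error suffices after enlarging $D$ by a factor $(2\tau+1)$. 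Collecting terms gives
\[
 \delta_k\leq\frac{H_0+D_KE_0+4\alpha\tau LD_K^2+c_\tau}{k+1}.
\]
The constant $D$ depends only on the stated data, not on the trajectory, delays, or selections.

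The main obstacle is checking that Theorem~\ref{thm:inexact} applies exactly as stated: the iteration must be \eqref{eq:inexact-fw}, with $s_k$ a vertex, and its constant must be uniform over every trajectory satisfying the tolerance bound. Both hold here. The $s_k$ are vertices by assumption, and the bound on $\delta_k$ holds pointwise along every admissible trajectory. The delays and errors affect the trajectory only through which vertex is chosen, so each delayed or perturbed trajectory is an admissible trajectory of \eqref{eq:inexact-fw}. Uniformity then follows directly from the uniformity clause of Theorem~\ref{thm:inexact}.
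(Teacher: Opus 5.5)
Your proposal is correct and follows essentially the same route as the paper: it reduces to Theorem~\ref{thm:inexact} with $p=1$ by bounding the drift $\norm{x_k-x_{j_k}}=O(\tau/(k+1))$ via $\gamma_i=O(1/(i+1))$, uses Lipschitz continuity of $\Phi$ on $K$, compares $s_k$ with an exact minimizer for $\Phi(x_k)$, and absorbs the indices $k<2\tau$ with the diameter bound. The only differences are in the bookkeeping of constants: the paper uses $\gamma_i\leq\max\{1,\alpha\}/(i+1)$ and obtains $D=H_0+D_KE_0+2L\max\{1,\alpha\}D_K^2\tau$, while your constants are different but equally valid.
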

A proof is given in Appendix~\ref{app:delays-proof}.

\paragraph{Stochastic feedback.}
Bo\c{t}, Mertikopoulos, Staudigl, and Vuong \cite{BotEtAl2021} prove
almost-sure convergence for a minibatch forward-backward-forward method
under pseudomonotonicity, using one projection per iteration. Our
robustness bounds apply when the oracle errors satisfy a common decaying
envelope. Establishing comparable FW distance rates under unbiased
stochastic feedback without such a pathwise envelope remains open here.

\subsection{Limitations}
The proof does not provide explicit bounds on the constants.
Uniform vertex-face identification and
entry into a neighborhood where the Taylor remainder is small enough for
Lemma~\ref{lem:absorb} are
obtained by compactness, and the cycling coefficient comes from an
existence theorem \cite{RobustCycling}. The bounds therefore give the
dependence on the accuracy for each fixed instance, not an explicit oracle
budget from the initial point, and the uniformity is over all admissible
trajectories of a fixed instance, schedule, and error envelope rather than
over a class of feasible sets and mappings.

The rate theorems retain the polytopal setting, the vertex restriction,
and the exposed-face coercivity assumption. The inexact result requires a
common decaying error envelope; it does not assert the same rate for
persistent positive errors or unbounded delays, and the dependence of its
constant on $D$ and $p$ is not claimed to be linear in $D$.

\subsection{Open problems}
\paragraph{Effective constants and step-size design.}
An effective bound for the cycling constant in terms of the direction
set and the operator would clarify the geometric dependence of the rate.
For the nonlinear FW theorem, one must also quantify attraction, vertex
identification, and entry into a neighborhood where the Taylor modulus
is small enough. This would turn the estimates
after entry into an explicit complexity bound from the initial iterate.
Corollary~\ref{cor:gap-distance} already separates the dual-gap exponents
of $1/(k+1)$ and $2/(k+2)$; effective estimates would also quantify their
constant and transient tradeoffs. A further question is whether adaptive step rules can improve
distance guarantees under the same VI assumptions while retaining a
single vertex-oracle update.

\paragraph{Minimax oracle complexity.}
A class-uniform comparison requires quantitative control of dimension,
polytope geometry, operator regularity, and coercivity, as well as a fixed
output and accuracy criterion. Proposition~\ref{prop:simplex-oracle}
gives a dimension-varying $\Omega(\varepsilon^{-1})$ lower bound for
VI-gap accuracy in the vertex-generated output model, even with
diameter, Lipschitz, and strong-monotonicity constants controlled.
The faster dual-gap rate has constants that depend on the instance
geometry.
Determining matching upper and lower oracle bounds under suitable
geometric assumptions, and the benefit of adaptive steps or other
output rules for nonlinear VIs, remains open here.

\paragraph{Weaker assumptions and other oracle outputs.}
It is natural to ask whether an error bound relative to the VI solution
set can replace coercivity on the exposed face, and what distance rates
remain possible when solutions are not unique. The growth-condition
and error-bound approaches in convex optimization
\cite{WirthPenaPokutta2025,WirthPenaPokuttaPolytopes2026} suggest a useful
comparison, but a nonsymmetric VI requires a separate argument.
Another question is whether the vertex restriction can be relaxed,
including for boundary solutions and approximate oracles returning
arbitrary points of $K$. The finite-direction cycling argument used here
does not directly cover such outputs.

\paragraph{Beyond polytopes.}
The uniform attraction lemma already applies to arbitrary compact convex
sets, but the rate proof uses a finite set of oracle directions.
Extending the last-iterate distance guarantee therefore requires a
replacement for this discrete geometric control. Possible settings
include solutions with a positive relative inradius, or assumptions that
give regularity of the linear minimization oracle, as in
\cite[Section~3.2]{Hough2026}. Boundary solutions require particular care
because the centered relative inradius then vanishes. We leave such
extensions outside the results proved here.

\section*{AI Disclosure}
Following the
\emph{Leiden Declaration on Artificial Intelligence and Mathematics}
by Alper et al.~\cite{LeidenDeclaration2026}, the author discloses discussions
with OpenAI's ChatGPT Astra and Anthropic's Claude Fable 5.1 and the use of OpenAI's Codex
for restructuring and editing the manuscript, checking
and tightening proofs, drafting parts of the Tikz figures, and suggesting related work. The
author verified all such output.
The research ideas, the method, and the main results are the authors'
own, and the author takes full responsibility for the content of this work.

\bibliographystyle{abbrvurl}
\begingroup
\small
\raggedright
\mathchardef\UrlBreakPenalty=100
\mathchardef\UrlBigBreakPenalty=100
\bibliography{master-bib}
\endgroup

\clearpage
\appendix
\section{Differential-Inclusion Details}\label{sec:compactness}
\subsection{Gap decay along the continuous dynamics}\label{sec:gap-decay}
We verify \eqref{eq:hough-gap-decay} for an absolutely continuous
$K$-valued solution of $\dot x\in\mathcal B(\Phi(x))-x$. Put
$q(x):=\min_{s\in K}\ip{\Phi(x)}{s}$ and write
$\dot x(t)=s(t)-x(t)$ with $s(t)\in\mathcal B(\Phi(x(t)))$.  At almost every time,
both $x$ and the Lipschitz function $q\circ x$ are differentiable.  At each
such time, put $p(t):=\Phi(x(t))$ and
$\dot p(t)=D\Phi(x(t))\dot x(t)$. The two one-sided envelope formulas are
\begin{align*}
 \lim_{h\downarrow0}\frac{q(x(t+h))-q(x(t))}{h}
 &=\min_{r\in\mathcal B(p(t))}\ip{\dot p(t)}r,\\
 \lim_{h\uparrow0}\frac{q(x(t+h))-q(x(t))}{h}
 &=\max_{r\in\mathcal B(p(t))}\ip{\dot p(t)}r.
\end{align*}
For the first formula, compare with a minimizer at $t$ for the upper bound,
and take a convergent subsequence of minimizers at $t+h$ for the lower
bound; compactness puts each limit in $\mathcal B(p(t))$. The expansion
$p(t+h)=p(t)+h\dot p(t)+o(|h|)$ is uniform when paired with $r\in K$.
The second formula follows in the same way with $h<0$, which reverses the
inequalities. Differentiability of $q\circ x$ makes the two limits equal.
Thus $\ip{\dot p(t)}r$ is the same for every $r\in\mathcal B(p(t))$.
Taking $r=s(t)$ and differentiating
$V(x(t))=\ip{p(t)}{x(t)}-q(x(t))$ gives
\[
 \frac{d}{dt}V(x(t))
 =-\ip{D\Phi(x(t))\dot x(t)}{\dot x(t)}-V(x(t))
 \leq -V(x(t))
\]
for almost every $t$.  The inequality follows from monotonicity of $\Phi$
by differentiating the monotonicity inequality along the feasible segment
from $x(t)$ to $s(t)$.  Absolute continuity then yields
\eqref{eq:hough-gap-decay}.

\subsection{Compactness for the interpolated dynamics}
The following is the standard compactness argument underlying limits of
time-shifted perturbed solutions; compare
\cite[proof of Theorem~4.2]{BenaimHofbauerSorin2005}. We state and prove
the version used in Section~\ref{sec:attraction} for families of
approximate trajectories on a compact domain.

\begin{theorem}[Compactness for approximate differential inclusions]
\label{thm:appendix-di-compactness}
Let $X\subset E$ be compact, and let
$G\colon X\rightrightarrows E$ be upper semicontinuous with nonempty compact
convex values.  Let $I=[a,b]$, and suppose that $f_n\colon I\to X$ are
absolutely continuous with a common Lipschitz constant.  Suppose there are
measurable maps $\widehat f_n\colon I\to X$ such that
\[
 \sup_{t\in I}\norm{f_n(t)-\widehat f_n(t)}\longrightarrow0
 \qquad(n\to\infty),
 \qquad
 \dot f_n(t)\in G(\widehat f_n(t))
 \quad\text{for a.e. }t\in I.
\]
Then there is a subsequence $(f_{n_j})_{j\geq1}$ and an absolutely continuous
function $f\colon I\to X$ such that
\[
 \sup_{t\in I}\norm{f_{n_j}(t)-f(t)}\longrightarrow0
 \qquad (j\to\infty)
\]
and
\[
 \dot f(t)\in G(f(t))
 \quad\text{for a.e. }t\in I.
\]
More generally, suppose that $f_n$ and $\widehat f_n$ are defined on intervals
$I_n$ that exhaust $\R$, in the sense that every compact interval is contained
in $I_n$ for all sufficiently large $n$, and that the preceding assumptions
hold on every compact interval, with the approximation error tending to zero
as $n\to\infty$.  Then there is a subsequence $(f_{n_j})_{j\geq1}$ and a
solution $f\colon\R\to X$ such that $f_{n_j}\to f$ uniformly on every compact
interval.
\end{theorem}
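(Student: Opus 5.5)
The plan follows the standard compactness route: Arzelà--Ascoli for a limit, weak convergence of derivatives, and a convexity argument (Mazur's lemma) to pass the inclusion to the limit.

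\textbf{Uniform limit.} Let $L$ be the common Lipschitz constant. The $f_n$ take values in the compact set $X$ and are equi-Lipschitz. Arzelà--Ascoli therefore gives a subsequence converging uniformly on $I$ to some $f\colon I\to X$, and $f$ is again $L$-Lipschitz, hence absolutely continuous.

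\textbf{Weak convergence of derivatives.} The derivatives satisfy $\norm{\dot f_n}\leq L$ almost everywhere, so they are bounded in $L^2(I;E)$. Passing to a further subsequence, $\dot f_{n_j}\rightharpoonup g$ weakly in $L^2$. Passing to the limit in $f_{n_j}(t)=f_{n_j}(a)+\int_a^t\dot f_{n_j}$, using uniform convergence on the left, yields $f(t)=f(a)+\int_a^t g$. Hence $\dot f=g$ almost everywhere.

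\textbf{The inclusion (main step).} Set $\varepsilon_j:=\sup_I\norm{f_{n_j}-\widehat f_{n_j}}\to0$, so $\widehat f_{n_j}\to f$ uniformly on $I$. Fix a time $t$ and $\epsilon>0$. Upper semicontinuity of $G$ at $f(t)$ gives $\delta>0$ with $G(x)\subseteq G(f(t))+\epsilon\bar B$ whenever $x\in X$ and $\norm{x-f(t)}<\delta$. To turn this pointwise statement into one on a full-measure set, I would use Mazur's lemma: convex combinations of the tail $\{\dot f_{n_j}:j\geq J\}$ converge strongly in $L^2$ to $\dot f$, so a subsequence of them converges almost everywhere. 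Fix a point $t$ where this a.e.\ convergence holds and where every $\dot f_{n_j}(t)\in G(\widehat f_{n_j}(t))$; this excludes only a null set. For $j\geq J$ large, $\norm{\widehat f_{n_j}(t)-f(t)}<\delta$, since the convergence is uniform. Then $\dot f_{n_j}(t)$ lies in the set $G(f(t))+\epsilon\bar B$, which is convex and closed. The convex combinations at $t$ therefore lie in it too, and so does their limit $\dot f(t)$. Letting $\epsilon\downarrow0$ and using that $G(f(t))$ is closed gives $\dot f(t)\in G(f(t))$. The only point needing care is that the exceptional null set must not depend on $\epsilon$ or $J$. This works if we take countably many $J$ and $\epsilon=1/m$ and perform a diagonal extraction of the a.e.-convergent Mazur subsequences, so that a single null set serves all of them.

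\textbf{Intervals exhausting $\R$.} Apply the compact-interval case successively on $[-M,M]$ for $M=1,2,\dots$, extracting nested subsequences. The diagonal subsequence converges uniformly on each $[-M,M]$. The limits are consistent on overlaps, so they define a single $f\colon\R\to X$, and the inclusion holds almost everywhere on each $[-M,M]$, hence almost everywhere on $\R$.
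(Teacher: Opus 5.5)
Your proof is correct. It coincides with the paper's in the Arzel\`a--Ascoli step, the weak $L^2$ compactness of the derivatives with the identification $g=\dot f$, and the diagonal extraction for intervals exhausting $\R$. The two arguments differ in how the inclusion is passed to the limit.

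The paper argues dually. It introduces the support functions $h_p(x)=\max_{v\in G(x)}\ip pv$ and their $\varepsilon$-enlargements $h_p^\varepsilon$, and checks that $h_p^\varepsilon$ is upper semicontinuous, so that $h_p^\varepsilon\circ f$ is measurable. It then derives $\ip p{\dot f_n(t)}\le h_p^\varepsilon(f(t))$ a.e.\ for large $n$ and transfers this inequality to the weak limit by testing against nonnegative bounded functions. Finally it recovers $\dot f(t)\in G(f(t))$ from $h_p^\varepsilon\downarrow h_p$, a countable dense set of directions $p$, and the support-function characterization of closed convex sets.

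You instead use Mazur's lemma to upgrade weak convergence to a.e.\ convergence of tail convex combinations. You then argue pointwise with the $\epsilon$-inflation form of upper semicontinuity, and convexity enters through the closed convex set $G(f(t))+\epsilon\bar B$ rather than through support functions. This is the classical convergence-theorem argument for differential inclusions. It avoids the measurability of $h_p^\varepsilon\circ f$ and the countable-direction bookkeeping. The price is invoking Mazur, which is itself a Hahn--Banach separation; the paper in effect performs that separation by hand through the $h_p$.

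Your handling of the exceptional set can be simplified, because no diagonal extraction is needed. Choose $\psi_k\in\conv\{\dot f_{n_j}:j\geq k\}$ with $\norm{\psi_k-\dot f}_{L^2}<1/k$, and pass to one subsequence converging a.e. Off a single null set, for any $t$ and $\epsilon$ the tail $k\geq J(t,\epsilon)$ of that one sequence suffices. The $\epsilon$-step is purely pointwise and creates no additional exceptional set; the only null sets are the countably many where some $\dot f_{n_j}(t)\notin G(\widehat f_{n_j}(t))$, where $\dot f(t)$ fails to exist, or where a.e.\ convergence fails.
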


\begin{proof}
Arzel\`a--Ascoli gives indices $(n_j)$ for which $f_{n_j}$ converges uniformly.
We pass to this subsequence and suppress the subscript $n_j$ in the remainder
of the proof.  The derivatives
$g_n:=\dot f_n$ are bounded in $L^2(I;E)$ by the common Lipschitz constant, so,
after another subsequence, they converge weakly to some $g$.  Passing to the
limit in
\[
 f_n(t)-f_n(s)=\int_s^t g_n(r)\,dr
\]
shows that $f(t)-f(s)=\int_s^t g(r)\,dr$ and hence $g=\dot f$ almost
everywhere.

It remains to pass to the inclusion.  For $p\in E$, let
$h_p(x):=\max_{v\in G(x)}\ip{p}{v}$.  Upper semicontinuity and compactness of
the values imply that $h_p$ is upper semicontinuous.  For $\varepsilon>0$
define its $\varepsilon$-enlargement by
\[
 h_p^\varepsilon(x):=\sup\left\{
 h_p(y):y\in X,\ \norm{y-x}\leq\varepsilon
 \right\}.
\]
The function $h_p^\varepsilon$ is upper semicontinuous, hence measurable.
Indeed, if $x_\ell\to x$, pass to a subsequence realizing the upper limit
and choose $y_\ell\in X$ within $\varepsilon$ of $x_\ell$ and within
$1/\ell$ of the defining supremum.  A further convergent subsequence of
$(y_\ell)$ and upper semicontinuity of $h_p$ give
$\limsup_\ell h_p^\varepsilon(x_\ell)\leq h_p^\varepsilon(x)$.

Fix $\varepsilon>0$.  Since $\widehat f_n\to f$ uniformly, for all
sufficiently large $n$ and almost every $t\in I$,
\[
 \ip{p}{g_n(t)}
 \leq h_p(\widehat f_n(t))
 \leq h_p^\varepsilon(f(t)).
\]
Consequently, for every nonnegative bounded measurable function $\varphi$,
weak convergence yields
\[
 \int_I\varphi(t)\ip{p}{g(t)}\,dt
 =\lim_{n\to\infty}\int_I\varphi(t)\ip{p}{g_n(t)}\,dt
 \leq\int_I\varphi(t)h_p^\varepsilon(f(t))\,dt.
\]
It follows that
$\ip{p}{g(t)}\leq h_p^\varepsilon(f(t))$ almost everywhere.  Apply this
conclusion with $p$ in a fixed countable dense subset of $E$ and with
$\varepsilon$ in the positive rationals.  Outside one null set, all these
inequalities hold simultaneously.  Moreover, upper semicontinuity of $h_p$
and compactness of $X$ imply
\[
 h_p^\varepsilon(x)\downarrow h_p(x)
 \qquad(\varepsilon\downarrow0).
\]
Indeed, $h_p^\varepsilon(x)\geq h_p(x)$, while maximizers corresponding to
any sequence $\varepsilon_\ell\downarrow0$ converge to $x$ and upper
semicontinuity gives the reverse inequality in the limit.
Thus $\ip{p}{g(t)}\leq h_p(f(t))$ for all $p$ in the countable dense set and
almost every $t$.  Continuity in $p$ extends the inequalities to every
$p\in E$, and the support-function characterization of a closed convex set
gives $g(t)\in G(f(t))$ almost everywhere.

For intervals exhausting $\mathbb R$, apply the preceding argument on
$[-m,m]$ and take a diagonal subsequence.  This is the finite-dimensional
convergence theorem for differential inclusions; see
\cite[Theorem~1.1.4]{AubinCellina1984}.
\end{proof}

\section{Proofs for the Extensions}\label{app:extension-proofs}
This appendix proves the extensions stated in
Section~\ref{sec:inexact-extensions}.

\subsection{Inexact vertex oracles}\label{app:inexact-proof}
\begin{proof}[Proof of Theorem~\ref{thm:inexact}]
First, the uniform attraction and vertex-face identification in
Lemma~\ref{lem:uniform-fw} extend to any fixed nonnegative envelope
$e_k\to0$. To see this, define
\[
 \mathcal B_e(p):=\{s\in K:\ip ps\leq\min_{v\in K}\ip pv+e\},
\]
and let $\mathfrak X_e$ consist of all trajectories with $x_0\in K$,
$x_{k+1}=x_k+\gamma_k(s_k-x_k)$, and
$s_k\in\mathcal B_{e_k}(\Phi(x_k))$. In the contradiction argument of
Lemma~\ref{lem:uniform-fw}, the interpolations now satisfy
\[
 \dot W_j(t)\in
 \mathcal B_{e_{\ell_j(t)}}(\Phi(\widehat W_j(t)))-\widehat W_j(t).
\]
Both $\norm{W_j-\widehat W_j}$ and $e_{\ell_j(t)}$ tend to zero
uniformly on every compact time interval. Set
$\bar e:=\max\{1,\sup_k e_k\}<\infty$ and define on
$X:=K\times[0,\bar e]$ the map
\[
 \mathcal H(x,e):=\{(s-x,0):s\in\mathcal B_e(\Phi(x))\}.
\]
It has nonempty compact convex values and is upper semicontinuous:
the approximate minimization inequalities pass to limits in $(x,e,s)$,
and all values lie in the fixed compact set $(K-K)\times\{0\}$.
Apply Theorem~\ref{thm:appendix-di-compactness} to
$f_j=(W_j,0)$ and
$\widehat f_j=(\widehat W_j,e_{\ell_j})$.
The limit $(W,0)$ solves the exact differential inclusion, so the same
gap-decay contradiction gives
$\sup_{(x_k)\in\mathfrak X_e}\operatorname{dist}(x_k,\mathcal S)\to0$.
In the unique-solution polytopal case, identification is automatic
if $F_*=K$. Otherwise choose the common index large enough that also
$e_k<g_*/2$, with $g_*$ as in Lemma~\ref{lem:uniform-fw}.
A selected vertex outside $F_*$ would then
satisfy
\[
 g_*/2\leq\ip{\Phi(x_k)}{s_k-x^*}\leq e_k<g_*/2,
\]
which is impossible. This proves uniform identification for the
inexact trajectories.

Now take $e_k=D(k+1)^{-p}$. The decomposition, tangential shift, and
Taylor bound in the proof of Theorem~\ref{thm:app-fw} remain valid
after a common index $N$. If $U_*=\{0\}$, the rescaled state is
constant after identification and the claim follows immediately.
Otherwise, multiplying the inexact oracle inequality by $\rho_k$
adds $\rho_k\delta_k$ to \eqref{eq:boundary-oracle-error}.
Using the notation of that proof, put
\[
 E_m:=\max_{N\leq k<m}\rho_k\delta_k.
\]
The tolerance for updates with $N\leq k<m$ is at most
$2R_*\epsilon(Y_m+Q_N)+E_m$. Lemma~\ref{lem:absorb}, applied with
$b=2R_*\epsilon Q_N+E_m$ and $\eta=2R_*\epsilon$, gives
\[
 Y_m\leq
 \frac{C_{w,*}(W_m+Q_N+2R_*\epsilon Q_N+E_m)}
 {1-2C_{w,*}R_*\epsilon}.
\]
Here
\[
 E_m\leq D\max_{N\leq k<m}\frac{\rho_k}{(k+1)^p}
       =O(1+m^{(\alpha-p)_+}).
\]
Combining this with
$W_m=O(1+m^{(\alpha-1)_+})$ and $\rho_m\asymp m^\alpha$ proves
the asserted rate. The diameter bound covers the finite prefix,
and all constants are uniform over the stated family.
\end{proof}

\subsection{Gap rates with inexact oracles}\label{app:inexact-gaps}
\begin{proof}[Proof of the gap rates in Section~\ref{sec:inexact-extensions}]
Put $r=\min\{\alpha,1,p\}$.
The linear bound on $V$ in the proof of
Corollary~\ref{cor:gap-distance} still applies. Once the selected
vertices lie in $F_*$, the scalar
$b_k=\ip{\Phi(x^*)}{x_k-x^*}$ satisfies
$b_{k+1}=(1-\gamma_k)b_k$ exactly, even with an inexact oracle.
Thus $b_k=O(k^{-\alpha})$, and Lemma~\ref{lem:dual-growth} gives
$G(x_k)=O(k^{-\alpha}+k^{-2r})$.
\end{proof}

\subsection{Bounded delays and score-evaluation errors}\label{app:delays-proof}
\begin{proof}[Proof of Corollary~\ref{cor:delays}]
Let $L$ be a Lipschitz constant for $\Phi$ on $K$, let
$D_K:=\operatorname{diam}(K)$, and put $G:=\max\{1,\alpha\}$.
Since $\beta\geq\alpha$, we have $\gamma_i\leq G/(i+1)$, so
$\norm{x_{i+1}-x_i}\leq GD_K/(i+1)$. For $k\geq2\tau$ and $\tau\geq1$,
\[
 \norm{x_k-x_{j_k}}\leq GD_K\sum_{i=j_k}^{k-1}\frac1{i+1}
 \leq\frac{GD_K\tau}{k-\tau+1}\leq\frac{2GD_K\tau}{k+1}.
\]
For $k<2\tau$, the diameter bound gives the same final bound because
$G\geq1$; for $\tau=0$ the difference is zero. Thus
\[
 \norm{\widehat\Phi_k-\Phi(x_k)}\leq\frac{E_0+2LGD_K\tau}{k+1}.
\]
Comparing with an exact minimizer $v_k$ for $\Phi(x_k)$ gives
\[
 \ip{\Phi(x_k)}{s_k-v_k}
 \leq\frac{H_0}{k+1}
       +\ip{\Phi(x_k)-\widehat\Phi_k}{s_k-v_k}
 \leq\frac{H_0+D_KE_0+2LGD_K^2\tau}{k+1}.
\]
Theorem~\ref{thm:inexact} applies with $p=1$ and
$D=H_0+D_KE_0+2LGD_K^2\tau$.
\end{proof}

\end{document}

%% file: proof_roadmap.tex
\begin{figure}[H]
\centering
\begin{subfigure}[t]{.49\textwidth}
\centering
\begin{tikzpicture}[x=1cm,y=1cm,>=Stealth,font=\small]
 \useasboundingbox (-.65,-.95) rectangle (5.95,3.8);
 \fill[gray!5] (0,0) rectangle (5.4,3);
 \draw[black,thick] (0,3)--(0,0)--(5.4,0)--(5.4,3);
 \draw[dashed,gray!65] (0,3)--(5.4,3);
 \node at (2.7,3.5) {$K\cap\{0\leq\eta\leq\tfrac13\}$};
 \draw[red!70!black,very thick] (0,0)--(5.4,0);
 \node[red!70!black,above] at (4.5,0) {$F_*$};
 \foreach \p in {(0,0),(5.4,0)} \fill \p circle (1.5pt);
 \node[below left] at (0,0) {$v_-$};
 \node[below right] at (5.4,0) {$v_+$};
 \fill[red!70!black] (2.7,0) circle (2pt);
 \node[red!70!black,below=4pt] at (2.7,0) {$x^*$};
 \draw[dashed,gray!65,->] (0,2.25)--(5.4,0);
 \draw[dashed,gray!65,->] (1.08,1.8)--(0,0);
 \draw[blue!70!black,thick] (0.000000,3.000000)--(0.000000,2.250000)--(1.080000,1.800000)--(0.900000,1.500000)--(1.542857,1.285714)--(1.350000,1.125000)--(1.800000,1.000000)--(1.620000,0.900000)--(1.963636,0.818182)--(1.800000,0.750000)--(2.076923,0.692308)--(1.928571,0.642857)--(2.160000,0.600000)--(2.025000,0.562500)--(2.223529,0.529412)--(2.100000,0.500000)--(2.273684,0.473684)--(2.160000,0.450000)--(2.314286,0.428571)--(2.209091,0.409091)--(2.347826,0.391304)--(2.250000,0.375000)--(2.376000,0.360000)--(2.284615,0.346154)--(2.400000,0.333333)--(2.314286,0.321429)--(2.420690,0.310345)--(2.340000,0.300000)--(2.438710,0.290323)--(2.362500,0.281250)--(2.454545,0.272727)--(2.382353,0.264706)--(2.468571,0.257143)--(2.400000,0.250000)--(2.481081,0.243243)--(2.415789,0.236842)--(2.492308,0.230769)--(2.430000,0.225000);
 \fill[blue!70!black] (0.000000,3.000000) circle (1.1pt);
 \fill[blue!70!black] (0.000000,2.250000) circle (1.1pt);
 \fill[blue!70!black] (1.080000,1.800000) circle (1.1pt);
 \fill[blue!70!black] (0.900000,1.500000) circle (1.1pt);
 \fill[blue!70!black] (1.542857,1.285714) circle (1.1pt);
 \fill[blue!70!black] (1.350000,1.125000) circle (1.1pt);
 \fill[blue!70!black] (1.620000,0.900000) circle (1.1pt);
 \fill[blue!70!black] (1.928571,0.642857) circle (1.1pt);
 \fill[blue!70!black] (2.160000,0.450000) circle (1.1pt);
 \fill[blue!70!black] (2.340000,0.300000) circle (1.1pt);
 \fill[blue!70!black] (2.430000,0.225000) circle (1.1pt);
 \draw[->,blue!70!black,thick] (0.000000,3.000000)--(0.000000,2.250000);
 \draw[->,blue!70!black,thick] (0.000000,2.250000)--(1.080000,1.800000);
 \draw[->,blue!70!black,thick] (0.900000,1.500000)--(1.542857,1.285714);
 \draw[->,blue!70!black,thick] (1.620000,0.900000)--(1.963636,0.818182);

 \node[blue!70!black,left=3pt] at (0,3) {$x_3$};
 \node[blue!70!black,left=3pt] at (0,2.25) {$x_4$};
 \node[blue!70!black,above right=1pt] at (1.08,1.8) {$x_5$};
 \node at (2.7,-.72) {$s_k\in\{v_-,v_+\}\subset F_*\quad(k\geq1)$};
\end{tikzpicture}
\caption{Vertex-face identification (enlarged detail).}
\label{subfig:face-entry}
\end{subfigure}\hfill
\begin{subfigure}[t]{.49\textwidth}
\centering
\begin{tikzpicture}[x=1cm,y=1cm,>=Stealth,font=\small]
 \useasboundingbox (-.65,-.95) rectangle (5.95,3.8);
 \node at (2.7,3.5) {$\widetilde y_k=k(\xi_k-\tfrac12)+2$};
 \draw[->,gray!75] (0,1.8)--(5.5,1.8) node[right] {$T_*$};
 \foreach \a/\lab in {.45/0,1.95/\frac12,3.45/1,4.95/\frac32}{
  \draw[gray!75] (\a,1.72)--(\a,1.88);
  \node[below=5pt] at (\a,1.8) {$\lab$};
 }
 \draw[->,gray!80,thick] (4.88,1.87)--(3.52,1.87);
 \draw[->,gray!80,thick] (3.38,1.87)--(2.02,1.87);
 \fill[gray!80] (4.95,1.8) circle (2pt);
 \fill[gray!80] (3.45,1.8) circle (2pt);
 \node[above=8pt] at (4.95,1.8) {$\widetilde y_1$};
 \node[above=8pt] at (3.45,1.8) {$\widetilde y_2$};
 \fill[blue!70!black] (.45,1.8) circle (2.2pt);
 \fill[blue!70!black] (1.95,1.8) circle (2.2pt);
 \draw[->,blue!70!black,thick] (1.92,1.89)
  .. controls (1.65,2.85) and (.75,2.85) .. (.48,1.89);
 \node[blue!70!black,above] at (1.2,2.58) {$u_k=-\tfrac12$};
 \draw[->,blue!70!black,thick] (.48,1.71)
  .. controls (.75,.74) and (1.65,.74) .. (1.92,1.71);
 \node[blue!70!black,below] at (1.2,1.03) {$u_k=\tfrac12$};
 \node at (2.7,.1) {$\widetilde y_{k+1}=\widetilde y_k+u_k$};
 \node at (2.7,-.72) {$u_k\in U_*=\{-\tfrac12,\tfrac12\}$};
\end{tikzpicture}
\caption{Tangential additive updates.}
\label{subfig:tangential-cycling}
\end{subfigure}
\caption{Face identification and cycling for an  example on
$K=[0,1]^2$, with $\Phi(\xi,\eta)=(\xi-\tfrac12+2\eta,
-2(\xi-\tfrac12)+\eta+\tfrac25)$, $\gamma_k=1/(k+1)$, and
$x_0=(1,0)$. The solution is $x^*=(\tfrac12,0)$ and
$F_*=[0,1]\times\{0\}$.
\subref{subfig:face-entry} Enlarged view of $0\leq\eta\leq1/3$, showing
$x_3,\ldots,x_{40}$; $x_1$ and $x_2$ lie above the displayed strip.
After $s_0=x_1=(0,1)$, every selected vertex lies in $F_*$;
the dashed segments point toward two selected vertices. The iterates
approach the face with $\eta_k=1/k>0$.
\subref{subfig:tangential-cycling} Rescaling makes the normal component
constant, and a tangential shift gives an additive update with increments $\pm1/2$.
Choosing $v_+$ at horizontal ties, $\widetilde y_k$ alternates between $0$
and $1/2$ for $k\geq4$. For nonlinear mappings,
Theorem~\ref{thm:weighted} controls the corresponding recursion with a
Taylor-remainder tolerance.}
\label{fig:proof-roadmap}
\end{figure}

%% file: face_identification.tex
\begin{figure}[!b]
\centering
\begin{tikzpicture}[x=1.35cm,y=1.12cm,>=Stealth,font=\small]
 \coordinate (A) at (0,0); \coordinate (B) at (4,0);
 \filldraw[fill=gray!9,draw=black,thick]
 (A)--(B)--(5.2,1.8)--(3.6,3.6)--(.8,3.2)--(-1,1.6)--cycle;
 \foreach \p in {(0,0),(4,0),(5.2,1.8),(3.6,3.6),(.8,3.2),(-1,1.6)}
   \fill \p circle (1.3pt);
 \node at (.6,2.4) {$K$};
 \draw[red!75!black,very thick] (A)--(B);
 \node[below left] at (A) {$v_1$};
 \node[below right] at (B) {$v_2$};
 \node[red!75!black,below=9pt] at (3.7,0) {$F_*$};
 \fill[red!75!black] (2,0) circle (1.7pt);
 \node[red!75!black,above left=1pt] at (2,0) {$x^*$};
 \draw[->,red!75!black,thick] (2,0)--(3.05,0)
   node[above=3pt] {$T_*$};
 \draw[->,gray!75!black,thick] (2,0)--(2,1.08)
   node[above left] {$T_*^\perp$};
 \draw[->,violet,thick] (2,0)--(2,-.83)
   node[below] {$-\Phi(x^*)$};
 \draw[blue!70!black,thick,->]
 (3.4,2.6)--(1.3,1.9)--(2.5,1.3)--(1.6,.9)--(2.25,.62)
 --(1.85,.42)--(2.12,.28)--(1.95,.18)--(2.06,.10);
 \node[blue!70!black,above right] at (3.4,2.6) {$x_k$};
 \draw[dashed,gray] (1.3,1.9)--(A);
 \draw[dashed,gray] (2.5,1.3)--(B);
 \node[anchor=west,align=left] at (5.45,1.6)
 {selected vertices\\$s_k\in\{v_1,v_2\}$\\for $k\geq k_0$};
\end{tikzpicture}
\caption{Exposed-face identification, shown schematically.
Selected vertices eventually lie in $F_*$, while the iterates need not
enter that face. The rescaled error $z_k=\rho_k(x_k-x^*)$ has a constant
normal component $q$ and moves only along $T_*$. Writing its tangential
component as $y_k$, the substitution $\widetilde y_k=y_k+A_*^{-1}c$
gives $A_*\widetilde y_k=A_*y_k+c$. The normal component $q$ stays unchanged.}
\label{fig:face-identification}
\end{figure}

%% file: lower_bound.tex
\begin{figure}[!b]
\centering
\begin{tikzpicture}[x=4.0cm,y=3.8cm,>=Stealth,font=\small]
 \fill[gray!7] (0,0) rectangle (1,1);
 \draw[thick] (0,0) rectangle (1,1);
 \foreach \p in {(0,0),(1,0),(0,1),(1,1)} \fill \p circle (1.2pt);
 \node[below left] at (0,0) {$(0,0)$};
 \node[below right] at (1,0) {$(1,0)$};
 \node[above left] at (0,1) {$(0,1)$};
 \node[above right] at (1,1) {$(1,1)$};
 \fill[red!75!black] (.5,0) circle (1.7pt);
 \node[red!75!black,below=12pt] at (.5,0) {$x^*=(\tfrac12,0)$};
 \draw[blue!70!black,thick,->]
 (.84,.92)--(.46,.51)--(.65,.33)--(.43,.22)--(.59,.16)
 --(.44,.12)--(.56,.09)--(.46,.07)--(.54,.055)--(.48,.043);
 \node[blue!70!black,above left] at (.84,.92) {$x_k$};
 \draw[<->,blue!70!black] (.40,.62)--(.67,.62);
 \node[blue!70!black,anchor=east,align=right] at (-.09,.64)
 {oscillation\\$\Delta_k\geq\gamma_k/8$\\eventually};
 \draw[blue!70!black,->] (-.07,.62)--(.38,.62);
 \draw[decorate,decoration={brace,amplitude=4pt,mirror},gray]
 (1.06,0)--(1.06,.92);
 \node[anchor=west,align=left] at (1.17,.49)
 {memory\\$\displaystyle\eta_k=\eta_n\prod_{j=n}^{k-1}(1-\gamma_j)$\\[3pt]
 $\displaystyle\phantom{\eta_k}=\eta_n\rho_k^{-1}$};
\end{tikzpicture}
\caption{The two mechanisms in the proof of Theorem~\ref{thm:schedule-characterization},
shown schematically on $K=[0,1]^2$ with
$\Phi(\xi,\eta)=(\xi-1/2,\eta)$. The horizontal coordinate forces an
adjacent-iterate oscillation bound; the vertical coordinate retains
a fraction $\rho_k^{-1}$ of its positive value $\eta_n$ at the starting index.
The drawn path illustrates the geometry, rather than a specified schedule.}
\label{fig:lower-bound}
\end{figure}

%% file: schedule_rates.tex
\begin{figure}[htbp]
\centering
\begin{tikzpicture}
\begin{loglogaxis}[
 width=.86\textwidth,height=.52\textwidth,
 xlabel={$k$},ylabel={$\Delta_k=\max\{\|x_k-x^*\|,\|x_{k+1}-x^*\|\}$},
 xmin=1,xmax=1e5,ymin=1e-6,ymax=2,
 legend style={font=\footnotesize,draw=gray!35,fill=white},
 legend pos=south west,legend cell align=left,
 grid=major,grid style={gray!18},
 every axis plot/.append style={thick,mark=none},
]
\addplot[blue!75!black] table[x=k,y=a1] {rates.dat};
\addplot[red!75!black] table[x=k,y=a2] {rates.dat};
\addplot[green!45!black] table[x=k,y=a05] {rates.dat};
\addplot[orange!85!black] table[x=k,y=sqrt] {rates.dat};
\addplot[black,dashed,domain=1:1e5,samples=2] {1/x};
\addplot[black,dotted,domain=1:1e5,samples=2] {1/sqrt(x)};
\legend{$1/(k+1)$,$2/(k+2)$,$\tfrac12/(k+\tfrac12)$,
        $1/\sqrt{k+1}$,$k^{-1}$,$k^{-1/2}$}
\end{loglogaxis}
\end{tikzpicture}
\caption{Numerical illustration on the fixed instance of
Theorem~\ref{thm:schedule-characterization}, with $x_0=x^*$ and the exact
tied selection $s_0=(1,1)$. Thereafter choose $a_k=1$ at horizontal ties
and $b_k=0$. We plot the two-iterate envelope from
Theorem~\ref{thm:schedule-characterization}, sampled at logarithmically
spaced indices through $10^5$. The first two schedules have order $k^{-1}$,
and the latter two have order $k^{-1/2}$. For harmonic steps,
$k\|x_k-x^*\|$ equals $1$ at even indices and $\sqrt5/2$ at odd indices;
the envelope displays the rate without this parity oscillation.}
\label{fig:schedule-rates}
\end{figure}